\newtheorem{theorem}{Theorem}
\newtheorem*{theorem*}{Theorem}
\newtheorem*{acknowledgement*}{Acknowledgement}
\newtheorem*{definition*}{Definition}
\newtheorem{lemma}[theorem]{Lemma}
\newcommand{\pdt}[0]{\frac{\partial}{\partial t}}
\newcommand{\gt}[0]{\tilde{g}}
\newcommand{\nabh}[0]{\hat{\nabla}}
\newcommand{\Rc}[0]{\operatorname{Rc}}
\newcommand{\Rm}[0]{\operatorname{Rm}}
\newcommand{\Xt}[0]{\tilde{X}}
\newcommand{\Yt}[0]{\tilde{Y}}
\newcommand{\Rmt}[0]{\widetilde{\operatorname{Rm}}}
\newcommand{\dfn}[0]{\doteqdot}
\newcommand{\wt}[1]{\widetilde{#1}}
\newcommand{\nabt}[0]{\wt{\nabla}}
\newcommand{\pdtau}[0]{\partial_{\tau}}
\newcommand{\Deltat}[0]{\widetilde{\Delta}}
\newcommand{\Ec}[0]{\mathcal{E}}
\newcommand{\Fc}[0]{\mathcal{F}}
\newcommand{\Ic}[0]{\mathcal{I}}
\newcommand{\Lc}[0]{\mathcal{L}}
\newcommand{\Nc}[0]{\mathcal{N}}
\newcommand{\Wc}[0]{\mathcal{W}}
\newcommand{\Xc}[0]{\mathcal{X}}
\newcommand{\Yc}[0]{\mathcal{Y}}
\newcommand{\Vt}[0]{\tilde{V}}
\newcommand{\Et}[0]{\tilde{E}}
\newcommand{\supp}[0]{\operatorname{supp}}
\title[Backward uniqueness for some geometric evolution equations]
{A short proof of backward uniqueness for some geometric evolution equations.}
\author{Brett Kotschwar}
\address{Arizona State University, Tempe, AZ, USA}
\email{kotschwar@asu.edu}
\thanks{The author was supported in part by NSF grant DMS-1160613.}
\date{Aug 2013}
\begin{document}
\begin{abstract}
We give a simple, direct proof of the backward uniqueness
of solutions to a class of second-order geometric evolution equations including 
the Ricci and cross-curvature flows.
The proof, based on a classical argument of Agmon-Nirenberg,
uses the logarithmic convexity of a certain energy quantity in the place of Carleman inequalities. 
We also demonstrate the applicability of the technique to the $L^2$-curvature flow and
other higher-order equations.
\end{abstract}
\maketitle

\keywords{}

\maketitle

\section{Introduction}
The invariance of a curvature flow $\pdt g = E(g)$ on a smooth manifold $M$ 
under the action of $\operatorname{Diff}(M)$ has the important consequence that the isometry group of a solution
remains unchanged within any class
in which the uniqueness of solutions forward and backward in time is assured. 
An operator $E$ with such an invariance, however, 
is never strongly elliptic, and so these same questions of uniqueness are not merely 
consequences of standard parabolic theory.  

Whereas the problem of forward uniqueness for curvature flows can
often be converted to one for a strictly parabolic system by the means of DeTurck's trick (see, e.g., \cite{HamiltonSingularities}, 
\cite{ChenZhu},
\cite{BahuaudHelliwellU}), the ill-posedness of terminal-value problems for parabolic equations 
prohibits the application of this same technique
to the problem of backward uniqueness. This is discussed in some detail
in \cite{KotschwarRFBU} in the context of the Ricci flow. In that paper we show that, nevertheless,
the problem can be recast as one for a prolonged system to which the usual method
of weighted $L^2$ (Carleman-type) estimates can be extended.

 The Carleman inequality approach to problems of backward uniqueness, which originates in the classical work of Lees and Protter \cite{LeesProtter},
\cite{Protter}, is extremely
robust and is the basis of a vast literature of results on the unique continuation of solutions to parabolic equations;
for a picture of the current state-of-the-art, we refer the reader to \cite{Escauriaza}, 
\cite{EscauriazaFernandez}, \cite{EKPV}, \cite{ESS}, \cite{EV}, \cite{Fernandez}, \cite{KochTataru}, \cite{Sogge}, \cite{Tataru},
and the references therein.  For the \emph{global} problem of backward uniqueness, however,
the simplest approach is perhaps still the 
energy-quotient/logarithmic-convexity technique of Agmon and Nirenberg \cite{AgmonNirenberg1}, \cite{AgmonNirenberg2}.
This approach dates to a few years after \cite{LeesProtter} and has since been clarified and 
extended by a number of authors (see, e.g., \cite{BardosTartar}, \cite{Ghidaglia},
\cite{Kukavica1}, \cite{Kukavica2}, \cite{Ogawa}). The analysis of the basic energy quotient on which it is centered
is related in turn to that of the parabolic frequency quantities of Poon \cite{Poon} and others (e.g., \cite{EKPV}).

The primary purpose of this paper is to show that this energy quotient technique can also be applied
to backward uniqueness problems for geometric flows once they have been appropriately reformulated.
In Section \ref{sec:2ndorderbu}, we use this technique to give a shorter and more transparently 
quantitative proof of the general backward uniqueness theorem
for systems of mixed differential inequalities in \cite{KotschwarRFBU}. In Section \ref{sec:2ndorderapps}, we discuss 
the application of this result to the problem of backward uniqueness for the Ricci and cross-curvature flows. 
In Section \ref{sec:l2}, we use this technique to prove a backward uniqueness theorem for solutions
to the fourth-order $L^2$-curvature flow and discuss extensions to a class of of higher-order equations.

We have already mentioned the connection of uniqueness results for geometric flows
to the preservation of the isometry group of a solution: for such flows, a backward uniqueness theorem implies that solutions do
not acquire new isometries during their smooth lifetimes. There are a number of 
other qualitative properties of geometric flows, however, which, when suitably formulated,
 amount to statements of backward uniqueness for some system associated to the flow. Among them,
for example, are the impossibility of a classical solution to the Ricci flow
becoming spontaneously Einstein or self-similar  at some point in its evolution \cite{KotschwarRFBU}
 or acquiring a novel K\"ahler or local product structure \cite{KotschwarRFHolonomy}. A backward uniqueness problem (albeit a somewhat nonstandard one -- see \cite{ESS})
is also at the heart of our our recent work \cite{KotschwarWang} with Lu Wang on the asymptotic rigidity of shrinking Ricci solitons. The 
connection between this particular type of problem of elliptic unique-continuation-at-infinity and 
backward uniqueness was first illustrated by Wang in \cite{WangMCFCone} (cf. \cite{WangMCFCyl}) in the context of the mean curvature flow.

\section{A general backward uniqueness theorem for second-order systems}
\label{sec:2ndorderbu}
\subsection{Definitions and conventions}
In this section, $M = M^n$ will denote a connected smooth manifold equipped with a smooth family $g(\tau)$
of Riemannian metrics defined for $\tau\in [0, \Omega]$; the Levi-Civita connection
and volume density of $g(\tau)$ will be denoted by $\nabla =\nabla(\tau)$ and $d\mu = d\mu_{g(\tau)}$, respectively.
Let $\Xc$ and $\Yc$ be smooth vector bundles over $M$ equipped with their own smooth families of bundle metrics.
To avoid having to introduce separate notation for these metrics, we will regard
$\Xc$ and $\Yc$ as orthogonal subbundles of the bundle $\Wc = \Xc\oplus \Yc$ relative to the 
single family of metrics $\gamma = \gamma(\tau)$ on $\Wc$.
We will write $\pdtau g \dfn b$, $\pdtau \gamma \dfn \beta$, and $B \dfn \operatorname{tr}_g(b) = g^{ij}b_{ij}$. 

We will also assume that $\Xc$ is equipped with a smooth family of connections $\nabh = \nabh(\tau)$ compatible
with $\gamma(\tau)$, and use $\nabh$ to denote also the  family of connections
induced by $\nabla$ and $\nabh$ on tensor products of the bundles $TM$ and $T^*M$ with $\Xc$. With this convention,
we introduce the elliptic operator $\Box \dfn \Lambda^{ij}\nabh_i\nabh_j$ for
some smooth, symmetric positive definite family of sections $\Lambda = \Lambda(\tau)$ of $T^2_0(M)$.
Here and elsewhere we use the convention
that repeated Latin indices denote a sum from $1$ to $n$. Finally, we define the backward and forward parabolic operators
\[
  \Lc_B \dfn \pdtau + \Box + \nabla_{i}\Lambda^{ij}\nabla_j, \quad \Lc_F \dfn 
\pdtau - \Box - \nabla_{i}\Lambda^{ij}\nabla_j
\]
acting on smooth families of sections of $\Xc$.

\subsection{Integral identities}

The main ingredients of the proof of Theorem \ref{thm:bu2ndorder} below
are the integral identities and inequalities in the next two lemmas. Except
for some ultimately insignificant error terms arising from the time-dependency of the metrics
and connections involved, these identities are essentially specific instantiations of those in  Agmon-Nirenberg \cite{AgmonNirenberg1, AgmonNirenberg2} 
(see also \cite{Kukavica1}, \cite{Poon}). 

For $\tau \in [0, \Omega]$, we will let $(\cdot, \cdot )$ denote the $L^2(d\mu_{g(\tau)})$-inner product
induced by $\gamma(\tau)$ on $\Wc$,
that is,
\[
    (U, V) \dfn \int_{M}\langle U, V\rangle_{\gamma(\tau)}\,d\mu_{g(\tau)},
\]
and use $\|\cdot\|$ to denote the associated norm. To reduce the clutter in our expressions, we will use an unadorned norm $|\cdot|$
to denote all fiberwise norms induced by $g$ and $\gamma$ on the various bundles we will encounter below;
the context will determine the norm unambiguously.
For smooth compactly supported families of sections $X = X(\tau)$ and $Y= Y(\tau)$ of $\Xc$ and $\Yc$, respectively, we define 
\[
  \Ec \dfn \Ec(X, Y) \dfn \|X\|^2 + \|Y\|^2, \quad \Fc \dfn \Fc(X),
  \dfn \int_M |\nabh X|_{\Lambda}^2\,d\mu
\]
where $|\nabh X|_{\Lambda}^2 \dfn \Lambda^{ij}\langle\nabh_i X, \nabh_j X\rangle$.

\begin{lemma}\label{lem:intident}
 Suppose $X$ and $Y$ are smooth families of sections of $\mathcal{X}$ and $\mathcal{Y}$ on $[0, \Omega]$
such that, for all $\tau$, $\operatorname{supp}{X(\cdot, \tau)}$ and $\operatorname{supp}{Y(\cdot, \tau)}$
are contained in some fixed compact subset of $M$.
Then, 
\begin{align} 
\begin{split}\label{eq:l2ev} 
  \frac{d}{d\tau} \Ec(X, Y)
  &= 2\Fc(X) +  2\left(\Lc_B X, X\right) + 2\left(\partial_{\tau} Y, Y\right) + I_1(X, Y)\\
 &= \left(\Lc_B X, X\right) + \left(\Lc_F X, X\right) + 2\left(\partial_{\tau} Y, Y\right) +  I_1(X, Y),	 
\end{split}\\  
\begin{split}\label{eq:h1arr1}
  \mathcal{F}(X) &= \frac{1}{2}\left(\left( \Lc_F X, X) - (\Lc_B X, X\right)\right),
 \end{split}\\ 
\begin{split}\label{eq:h1ev}
 \frac{d}{d\tau}\Fc(X) &=
    \frac{1}{2}\left(\|\Lc_F X\|^2 - \|\Lc_B X\|^2\right)+ I_2(X, \nabh X),
\end{split}
\end{align}
for all $\tau \in [0, \Omega]$, where
\begin{align*}
    I_1(X, Y) &\dfn\int_M\left(\beta(X\oplus Y, X\oplus Y) + \frac{B}{2}(|X|^2+ |Y|^2)\right)\,d\mu,\\
\begin{split} 
 I_2(X, \nabh X) &\dfn \int_M\bigg(\pdtau\Lambda^{ij}\langle\nabh_i X, \nabh_j X\rangle
+ 2\Lambda^{ij}\left\langle\left[\pdtau, \nabh_i\right]X, \nabh_j X\right\rangle\\
&\phantom{=\int_M\bigg(}+ \Lambda^{ij}\beta(\nabh_i X, \nabh_j X) + \frac{B}{2}|\nabh X|_{\Lambda}^2 \bigg)\,d\mu.
\end{split}
\end{align*}
\end{lemma}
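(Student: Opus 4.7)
My plan is to establish the three identities by direct computation, relying only on the product rule, the commutator $[\partial_\tau, \nabh_i]$, and integration by parts (permitted by the uniform compact support of $X$ and $Y$, which eliminates all boundary terms). I will prove \eqref{eq:h1arr1} first, use it to derive both forms of \eqref{eq:l2ev}, and finally pair the same computation with a backward/forward polarization to obtain \eqref{eq:h1ev}.

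The key ingredient I will need is
\[
  \bigl((\Box + \nabla_i \Lambda^{ij}\nabla_j) X,\, Z\bigr) \;=\; -\int_M \Lambda^{ij}\langle\nabh_i X, \nabh_j Z\rangle\,d\mu,
\]
valid for compactly supported sections $X, Z$ of $\Xc$. To prove it I would start from the divergence identity $\int_M \nabla_i\bigl(\Lambda^{ij}\langle\nabh_j X, Z\rangle\bigr)\,d\mu = 0$, expand via the product rule, and rearrange: the resulting $(\nabla_i\Lambda^{ij})$ contribution exactly cancels the explicit $\nabla_i\Lambda^{ij}\nabla_j$ term on the other side, leaving the stated formula. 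Setting $Z = X$ gives $((\Box + \nabla_i\Lambda^{ij}\nabla_j)X,X) = -\Fc(X)$, and since $\Lc_B - \Lc_F = 2(\Box + \nabla_i\Lambda^{ij}\nabla_j)$, pairing against $X$ yields \eqref{eq:h1arr1} immediately.

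For \eqref{eq:l2ev}, I would differentiate $\Ec = \|X\|^2 + \|Y\|^2$ under the integral. The $\tau$-dependence of the fiber metric $\gamma$ and of the volume density $d\mu_{g(\tau)}$ contributes exactly $I_1(X, Y)$, leaving $2(\partial_\tau X, X) + 2(\partial_\tau Y, Y)$. Writing $\partial_\tau X = \Lc_B X - (\Box + \nabla_i\Lambda^{ij}\nabla_j)X$ and invoking the key ingredient converts $(\partial_\tau X, X)$ into $(\Lc_B X, X) + \Fc(X)$, producing the first form of \eqref{eq:l2ev}; using \eqref{eq:h1arr1} to rewrite $2\Fc(X) = (\Lc_F X, X) - (\Lc_B X, X)$ then yields the second.

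To obtain \eqref{eq:h1ev}, I would identify the ``principal'' quantity $P := 2\int_M \Lambda^{ij}\langle\nabh_i \partial_\tau X, \nabh_j X\rangle\,d\mu$ in two independent ways. First, distributing $\ddtau$ across $\Lambda^{ij}$, the fiber metric, the volume form, and the commutator $[\partial_\tau, \nabh_i]$ inside the integrand of $\Fc(X)$ separates out precisely the four terms of $I_2(X, \nabh X)$ and leaves $P = \ddtau \Fc(X) - I_2(X, \nabh X)$. Second, the polarization
\[
  \|\Lc_F X\|^2 - \|\Lc_B X\|^2 \;=\; \bigl(\Lc_F X + \Lc_B X,\, \Lc_F X - \Lc_B X\bigr) \;=\; -4\bigl((\Box + \nabla_i\Lambda^{ij}\nabla_j) X,\, \partial_\tau X\bigr),
\]
combined with the key ingredient applied at $Z = \partial_\tau X$, gives $P = \tfrac{1}{2}(\|\Lc_F X\|^2 - \|\Lc_B X\|^2)$; equating the two expressions for $P$ closes the identity. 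The only real difficulty throughout is bookkeeping: separating the several $\tau$-dependent objects cleanly and ensuring that the first-order $(\nabla_i\Lambda^{ij})$ contributions generated by integration by parts combine with those already encoded in $\Lc_B, \Lc_F$ rather than surviving as uncontrolled error.
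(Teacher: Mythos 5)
Your proposal is correct and follows essentially the same approach as the paper: differentiation under the integral sign, the integration-by-parts identity for the self-adjoint operator $\Box + \nabla_i\Lambda^{ij}\nabla_j$, and algebraic rearrangement in terms of $\Lc_B$ and $\Lc_F$. The only difference is cosmetic organization — you isolate the key adjointness identity up front and prove \eqref{eq:h1arr1} first, and you phrase the final step of \eqref{eq:h1ev} explicitly as a polarization of $\|\Lc_F X\|^2 - \|\Lc_B X\|^2$, which is exactly what the paper does implicitly when it rewrites $-2(\partial_\tau X, \Box X + \nabla_i\Lambda^{ij}\nabh_j X)$ as $\tfrac{1}{2}(\|\Lc_F X\|^2 - \|\Lc_B X\|^2)$.
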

\begin{proof}
The identities in \eqref{eq:l2ev} follow from differentiating under the integral sign and writing the
term involving $\pdtau X$ in two ways. Writing
\begin{align*}
    \int_{M}2\left\langle\pdtau X, X\right\rangle\,d\mu &=
      \int_{M}2\left\langle\Lc_B X -\Box X -\nabla_i\Lambda^{ij}\nabh_jX , X\right\rangle\,d\mu
\end{align*}
and integrating by parts yields the first, while writing
\begin{align*}
 \int_{M}2\left\langle\pdtau X, X\right\rangle\,d\mu =\int_{M}\left\langle\Lc_B X + \Lc_FX, X\right\rangle\,d\mu
\end{align*}
yields the second. 

 For equation \eqref{eq:h1arr1}, we simply integrate by parts to obtain
\[
     \Fc(X) = -\int_{M}\left\langle \Box X + \nabla_i\Lambda^{ij}\nabh_j X, X \right\rangle\,d\mu
  = \frac{1}{2}\int_{M}\!\!\!\left(\langle \Lc_F X, X\rangle - \langle \Lc_B X, X\rangle\right)\,d\mu.
\]

 For equation \eqref{eq:h1ev}, we differentiate under the integral sign to obtain
\begin{align*}
\begin{split}
 \frac{d}{d\tau}\Fc(X)
 &= 2\int_M\Lambda^{ij}\left(\left\langle \nabh_i\pdtau X, \nabh_j X \right\rangle +
\Lambda^{ij}\left\langle\left[\pdtau, \nabh_i\right]X, \nabh_j X\right\rangle  \right)\,d\mu\\
  &\phantom{=} +\int_{M}\left(\pdtau\Lambda^{ij}\langle\nabh_i X, \nabh_j X\rangle+ \Lambda^{ij}\beta(\nabh_i X, \nabh_j X) 
+ \frac{B}{2}|X|^2 \right)\,d\mu.
\end{split}
\end{align*}
Rewriting the first term as
\begin{align*}
&2\int_M\Lambda^{ij}\left\langle \nabh_i\pdtau X , \nabh_j X \right\rangle\,d\mu\\
&\qquad = -2\left(\pdtau X, \Box X + \nabla_i\Lambda^{ij}\nabh_j X\right) 
= \frac{1}{2}\left(\|\Lc_F X\|^2 - \|\Lc_B X\|^2\right)
\end{align*}
then yields the desired identity.
\end{proof}

The key step in the proof of Theorem \ref{thm:bu2ndorder} backward uniqueness theorem 
is to bound the frequency/energy-quotient quantity
 $\Fc/\Ec$, which controls the rate of vanishing of $\log \Ec$.  
The following inequality (which follows \cite{AgmonNirenberg1}, \cite{AgmonNirenberg2}, \cite{Poon}) 
is the basis of this bound.
\begin{lemma}\label{lem:freqbound}
Suppose that $\Ec(\tau) > 0$ on $(a, b)$ and that
 $X(\cdot, \tau)$ and $Y(\cdot, \tau)$ have support in some fixed compact subset of $M$ for each $\tau$.
Then $\Nc = \Fc/\Ec$ satisfies
\begin{align}
\begin{split}\label{eq:nev}
   -\left(\frac{\|\Lc_B X\|^2 + \|\partial_{\tau}Y\|^2}{2\Ec}\right) + \frac{\Ic}{\Ec^2}\leq
\dot{\Nc} \leq \frac{\|\Lc_F X\|^2 + \|\partial_{\tau}Y\|^2}{2\Ec} + \frac{\Ic}{\Ec^2}\\
\end{split} 
\end{align}
where $\Ic \dfn I_2(X, \nabh X)\Ec - I_1(X, Y)\Fc$.
\end{lemma}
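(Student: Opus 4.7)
The plan is to start from the quotient rule
$$
\dot{\Nc}\Ec^2 = \dot{\Fc}\Ec - \Fc\dot{\Ec},
$$
substitute the three integral identities from Lemma \ref{lem:intident}, and reduce each desired bound to the nonnegativity (or nonpositivity) of an explicit sum of squares obtained by completing the square with shift $c \dfn 2\Fc/\Ec = 2\Nc$.

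For the upper bound, I would use the first form of $\dot{\Ec}$ in \eqref{eq:l2ev}, i.e.\ $\dot{\Ec} = 2\Fc + 2(\Lc_B X,X) + 2(\partial_\tau Y,Y) + I_1$, together with \eqref{eq:h1ev}. The terms $\tfrac12\|\Lc_F X\|^2\Ec$, $I_2\Ec$, and $-\Fc I_1$ match exactly the claimed right-hand side (including $\Ic/\Ec^2$), so one is reduced to showing that the residual
$$
-\tfrac{\Ec}{2}\|\Lc_B X\|^2 - 2\Fc^2 - 2\Fc(\Lc_B X,X) - 2\Fc(\partial_\tau Y,Y) - \tfrac{\Ec}{2}\|\partial_\tau Y\|^2
$$
is $\le 0$. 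Expanding $-\tfrac{\Ec}{2}\|\Lc_B X + cX\|^2$ and $-\tfrac{\Ec}{2}\|\partial_\tau Y + cY\|^2$ and adding them produces exactly the residual \emph{plus} the cross term $-\tfrac{c^2\Ec}{2}(\|X\|^2+\|Y\|^2) = -2\Fc^2$, which is precisely what is needed to absorb the free $-2\Fc^2$ on the residual side. The completion of the square is thus exact, giving the upper bound.

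For the lower bound, I would replace $(\Lc_B X, X)$ using \eqref{eq:h1arr1} to obtain the alternative expression $\dot{\Ec} = -2\Fc + 2(\Lc_F X, X) + 2(\partial_\tau Y, Y) + I_1$. The same cancellations leave a residual analogous to the one above, and now completing the square with the \emph{opposite} shift, i.e. $\tfrac{\Ec}{2}\|\Lc_F X - cX\|^2 + \tfrac{\Ec}{2}\|\partial_\tau Y - cY\|^2$, reproduces the residual exactly with a manifestly nonnegative value.

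The main conceptual point — and the step most likely to cause bookkeeping headaches — is that the square completion works cleanly only because the shift $c = 2\Fc/\Ec$ is simultaneously natural for both the $\Xc$-piece and the $\Yc$-piece, and the $X$- and $Y$-error terms combine through $\|X\|^2 + \|Y\|^2 = \Ec$ to precisely cancel the stray $\pm 2\Fc^2$ produced by the $2\Fc$ contribution in $\dot{\Ec}$. Apart from this, the computation is just integration-by-parts applied to the formulas from Lemma \ref{lem:intident}, with the time-derivative and metric-variation errors harmlessly absorbed into $I_1$ and $I_2$.
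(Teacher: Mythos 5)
Your proof is correct, and it arrives at \eqref{eq:nev} by a somewhat different algebraic route than the paper. The paper expands $\dot{\Nc}\Ec^2 = \dot\Fc\Ec - \dot\Ec\Fc$ using the \emph{symmetric} form of $\dot\Ec$ in \eqref{eq:l2ev} and the factorization $\Fc = \tfrac12\bigl((\Lc_F X,X)-(\Lc_B X,X)\bigr)$, which yields the five-term expression \eqref{eq:nder}: two Cauchy--Schwarz discriminants $\|\Lc_F X\|^2\|X\|^2 - (\Lc_F X,X)^2 \geq 0$ and $(\Lc_B X,X)^2 - \|\Lc_B X\|^2\|X\|^2\leq 0$, together with $Y$-cross-terms that are then handled by the two Young-type inequalities \eqref{eq:ytermsl1}--\eqref{eq:ytermsl2}. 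You instead use the \emph{asymmetric} form $\dot\Ec = 2\Fc + 2(\Lc_B X,X) + 2(\partial_\tau Y,Y) + I_1$ (and its $\Lc_F$-variant, obtained from \eqref{eq:h1arr1}), and observe that the entire residual collapses to the single sum-of-squares identity
\begin{equation*}
\dot{\Nc}\Ec^2 = \frac{\Ec}{2}\|\Lc_F X\|^2 + \frac{\Ec}{2}\|\partial_\tau Y\|^2 + \Ic - \frac{\Ec}{2}\|\Lc_B X + 2\Nc X\|^2 - \frac{\Ec}{2}\|\partial_\tau Y + 2\Nc Y\|^2,
\end{equation*}
(and its mirror for the lower bound with shift $-2\Nc$ on $\Lc_F X$ and $\partial_\tau Y$), after which dropping the squares gives \eqref{eq:nev} directly. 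I verified the square completion: with $c = 2\Fc/\Ec$ one has $c\Ec = 2\Fc$ and $\tfrac{c^2\Ec}{2}(\|X\|^2+\|Y\|^2) = 2\Fc^2$, so the quadratic pieces of the two squares exactly reproduce the stray $\pm 2\Fc^2$; and the two identities are mutually consistent precisely because $(\Lc_F X,X) - (\Lc_B X,X) = 2\Fc$. Your phrasing ``produces exactly the residual \emph{plus} the cross term $-2\Fc^2$'' is slightly off --- the $-2\Fc^2$ is part of the residual, and the quadratic terms of the completed squares account for it --- but the computation behind it is right. What your version buys is that both halves of \eqref{eq:nev} become exact identities up to a manifestly signed remainder, whereas the paper's argument passes through two auxiliary inequalities; this also makes the saturation condition (namely $\Lc_B X = -2\Nc X$ and $\partial_\tau Y = -2\Nc Y$) visible at a glance. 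The underlying ingredients --- the integral identities of Lemma~\ref{lem:intident}, the quotient rule, and Cauchy--Schwarz --- are of course the same in both proofs.
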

\begin{proof} Now, $\dot{\Nc} = (\dot{\Fc}\Ec - \dot{\Ec}\Fc)/\Ec^2$ and,
by \eqref{eq:h1ev}, we have
\begin{equation*}
 \dot{\Fc}\Ec = \frac{1}{2}\bigg\{\left(\|\mathcal{L}_FX\|^2 -\|\mathcal{L}_{B}X\|^2\right)\|X\|^2
				  +\left(\|\mathcal{L}_FX\|^2 -\|\mathcal{L}_{B}X\|^2\right)\|Y\|^2\bigg\}  + I_2\Ec,
\end{equation*}
while, by \eqref{eq:h1arr1} and \eqref{eq:h1ev}, we have
\begin{align*}
\begin{split}
\dot{\Ec}\Fc &= \frac{1}{2}\bigg\{(\Lc_F X, X) + (\Lc_B X, X) 
  + 2(\partial_{\tau}Y, Y)\bigg\}\cdot\bigg\{(\Lc_F X, X) - (\Lc_B X, X)\bigg\}
   + I_1\Fc \\
&= \frac{1}{2}\bigg\{(\Lc_F X, X)^2 - (\Lc_B X, X)^2\bigg\}
   + (\partial_{\tau}Y, Y)\bigg\{(\Lc_F X, X) - (\Lc_B X, X)\bigg\}+ I_1 \Fc.
\end{split} 
\end{align*}
Thus, taken together, we have
\begin{align}
\begin{split}\label{eq:nder}
\dot{\Nc} &= \frac{\|\Lc_F X\|^2\|X\|^2 - (\Lc_F X, X)^2}{2\Ec^2}
     + \frac{(\Lc_B X, X)^2 - \|\Lc_B X\|^2\|X\|^2}{2\Ec^2} \\
  &\phantom{=} + \left\{\frac{\|\Lc_F X\|^2 - \|\Lc_B X\|^2}{2\Ec^2}\right\}\|Y\|^2
	       + \left\{\frac{(\Lc_B X, X) - (\Lc_F X, X)}{\Ec^2}\right\}(\partial_{\tau}Y, Y)\\
  &\phantom{=}+ \frac{I_2\Ec - I_1\Fc}{\Ec^2}.
\end{split}
\end{align}
Now,
\begin{align}\label{eq:ytermsl1}
  \frac{1}{2}\|\Lc_F X\|^2\|Y\|^2 - (\Lc_F X, X) (\partial_{\tau} Y, Y) 
  & \geq -\frac{1}{2}\|\partial_{\tau} Y\|^2\|X\|^2,
\end{align}
and
\begin{align}\label{eq:ytermsl2}
    \frac{1}{2}(\Lc_{B}X, X)^2 + (\mathcal{L}_B X, X) (\partial_{\tau}Y, Y) \geq - \frac{1}{2}\|\partial_{\tau} Y\|^2\|Y\|^2,
\end{align}
thus, since the first term in \eqref{eq:nder} is nonnegative, we have
\begin{align*}
 \dot{\Nc} \geq -\frac{(\|\Lc_B X\|^2 + \|\partial_{\tau} Y\|^2)(\|X\|^2+\|Y\|^2)}{2\Ec^2}  + \frac{\Ic}{\Ec^2},
\end{align*}
which is the left-hand inequality in \eqref{eq:nev}.

The right-hand inequality in \eqref{eq:nev} can be obtained in the same way, using the non-positivity of the second term
in \eqref{eq:nder} and substituting the inequalities 
\[
  -\frac{1}{2}\|\Lc_B X\|^2\|Y\|^2 + (\Lc_B X, X) (\partial_{\tau} Y, Y) 
  \leq \frac{1}{2}\|\partial_{\tau} Y\|^2\|X\|^2,
\]
and
\[
 -\frac{1}{2}(\Lc_{F}X, X)^2 - (\mathcal{L}_F X, X) (\partial_{\tau}Y, Y) \leq \frac{1}{2}\|\partial_{\tau} Y\|^2\|Y\|^2,
\]
in place of  
\eqref{eq:ytermsl1} and \eqref{eq:ytermsl2}, respectively.
\end{proof}

\subsection{A general backward uniqueness theorem}
\label{ssec:2ndorderbu}
The following theorem is a slight generalization of one proven in \cite{KotschwarRFBU} using Carleman-type estimates. We now
give a simpler proof.  The notation
$\left[\pdtau, \nabh\right]$ denotes the family of sections $\pdtau \nabh - \nabh\pdtau  \in C^{\infty}(T^*M\otimes \operatorname{End}(\Xc))$.

\begin{theorem}[cf. \cite{KotschwarRFBU}, Theorem 3.1]\label{thm:bu2ndorder}
Assume that $(M, g(\tau))$ is complete for all $\tau\in [0, \Omega]$, and let $r_0(x) = d_{g(0)}(x, x_0)$
for some fixed $x_0\in M$. Suppose that $\Lambda(\tau) \in C^{\infty}(T^2_0(M))$ satisfies the lower bound $\lambda g^{ij}\leq \Lambda^{ij}$ for some $\lambda > 0$ and, together with 
$g$ and $\nabh$, the uniform bounds
\begin{equation}\label{eq:ubounds}
\sup_{M\times[0, \Omega]}\left\{|b| + |\nabla b| + |\Lambda|  + |\nabla \Lambda| + \left|\partial_{\tau}\Lambda\right|
  + \left|\left[\pdtau, \nabh\right]\right| + |\Rm| \right\} \leq L_0,
\end{equation}
for some $\lambda$, $L_0 > 0$. Then, if the smooth families of sections
$X(\tau) \in C^{\infty}(\Xc)$ and $Y(\tau) \in C^{\infty}(\Yc)$ 
vanish identically for $\tau = 0$, satisfy the growth condition
\begin{equation}\label{eq:xygrowth}
  \sup_{M\times[0, \Omega]} e^{-L_1(r_0(x) + 1)}\left\{|X| + |\nabla X| + |Y| \right\} <\infty,
\end{equation} 
and the system of inequalities
\begin{align}\label{eq:pdeode2nd}
\begin{split}
\left|\partial_{\tau}X + \Box X\right| &\leq C_0\left(|X| + |\nabla X| + |Y|\right),\quad
\left|\partial_{\tau}Y\right| \leq C_0\left(|X| + |\nabla X| + |Y|\right). 
\end{split}
\end{align}
on $M\times [0, \Omega]$, they must vanish identically.
\end{theorem}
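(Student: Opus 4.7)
The plan is to apply the Agmon--Nirenberg logarithmic convexity method to the energy $\Ec = \Ec(X, Y)$ and the frequency quantity $\Nc \dfn \Fc/\Ec$ supplied by Lemmas \ref{lem:intident} and \ref{lem:freqbound}. Suppose, for contradiction, that $\Ec \not\equiv 0$; then $\Ec(\tau_\ast) > 0$ for some $\tau_\ast \in (0, \Omega]$, and $\Ec > 0$ on some maximal interval $(\tau_0, \tau_\ast]$. On this interval I would aim to establish both an a priori upper bound on $\Nc$ and a Lipschitz estimate $\left|\ddtau \log \Ec\right| \leq C(1 + \Nc)$. These would force $\Ec$ to be bounded below by a positive constant on $(\tau_0, \tau_\ast]$, hence by continuity to extend past $\tau_0$, contradicting the maximality of the interval unless $\tau_0 = 0$; but then $\Ec(0) > 0$ contradicts $X(0) = Y(0) = 0$.

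The central algebraic step is to convert the pointwise inequalities \eqref{eq:pdeode2nd}, the uniform bounds \eqref{eq:ubounds}, and the ellipticity $\Lambda^{ij} \geq \lambda g^{ij}$ into integral estimates of the shape
\[
    \|\Lc_B X\|^2 + \|\partial_\tau Y\|^2 \leq C(\Ec + \Fc) = C\Ec(1 + \Nc),
\]
together with $|I_1(X, Y)| \leq C\Ec$ and $|I_2(X, \nabh X)| \leq C(\Ec + \Fc)$, where $C$ depends only on $C_0$, $\lambda$, and $L_0$. Substituting these into the left-hand inequality in \eqref{eq:nev} yields $\dot\Nc \geq -C(1 + \Nc)$, which upon backward integration from $\tau_\ast$ gives the pointwise bound $\Nc(\tau) \leq (\Nc(\tau_\ast) + 1)e^{C(\tau_\ast - \tau)} - 1$ on $(\tau_0, \tau_\ast]$. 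Inserting this into \eqref{eq:l2ev} and applying Cauchy--Schwarz to the terms $(\Lc_B X, X)$ and $(\partial_\tau Y, Y)$ then produces the required Lipschitz control of $\log \Ec$.

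Making the above rigorous requires handling the non-compact supports of $X$ and $Y$, since Lemmas \ref{lem:intident} and \ref{lem:freqbound} presuppose a fixed compact support. My plan is to introduce smooth cutoffs $\eta_R \in C_c^\infty(M)$ with $\eta_R \equiv 1$ on $B_R(x_0)$, $\supp \eta_R \subset B_{2R}(x_0)$, and $|\nabla \eta_R|, |\nabla^2 \eta_R| \leq C/R$, which is available from the completeness of $g(\tau)$ and the bound on $|\Rm|$ in \eqref{eq:ubounds}. The truncations $(X_R, Y_R) \dfn (\eta_R X, \eta_R Y)$ are compactly supported, vanish at $\tau = 0$, and satisfy a perturbed version of \eqref{eq:pdeode2nd} with an additive error supported in the annulus $B_{2R} \setminus B_R$ and pointwise dominated there by $(C/R)(|X| + |\nabh X|)$.

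The principal obstacle I foresee is then propagating the frequency/log-convexity argument through these truncations and passing to the limit $R \to \infty$. Because \eqref{eq:xygrowth} permits $|X|, |\nabh X|, |Y|$ to grow like $e^{L_1 r_0}$, and the volume of $B_{2R}$ under $g(0)$ grows at most like $e^{CR}$ under $|\Rm| \leq L_0$, the cutoff contributions to $\|\Lc_B X_R\|^2$, $|I_{1,R}|$, and $|I_{2,R}|$ are at worst exponentially large in $R$. The core technical work is to absorb these errors into the main energy: one would first fix a short sub-interval $[0, \tau_\ast]$ and argue that, for $R$ sufficiently large (so that $\Ec_R(\tau_\ast)$ is positive), the perturbed Lipschitz bound on $\log \Ec_R$ survives in the limit and rules out $\Ec_R \to 0$ as $\tau \downarrow 0$; iterating on successive sub-intervals then exhausts $[0, \Omega]$. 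This truncation-and-limit step is the part I expect to require the most care.
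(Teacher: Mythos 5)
Your outline of the compact case is essentially the paper's argument: show $\dot{\Nc} \geq -C(\Nc+1)$ via Lemma \ref{lem:freqbound} to get a uniform bound $\Nc \leq N_0$, then combine with $\dot{\Ec} \leq C(\Ec+\Fc)$ to deduce logarithmic convexity of $\Ec$, which, together with $\Ec(\alpha)=0$ at the left endpoint, yields a contradiction. The intermediate inequalities $\|\Lc_B X\|^2+\|\partial_\tau Y\|^2 \leq C(\Ec+\Fc)$, $|I_1|\leq C\Ec$, $|I_2|\leq C(\Ec+\Fc)$ are exactly what the paper uses, obtained from \eqref{eq:pdeode2nd}, \eqref{eq:ubounds}, and $\|\nabh X\|^2 \leq \lambda^{-1}\Fc$.

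However, there is a genuine gap in your treatment of the noncompact case, and you have in fact already identified the precise symptom without proposing a cure. Bare cutoffs $\eta_R$ applied directly to $X$ and $Y$ produce error terms supported on the annulus $B_{2R}\setminus B_R$ which, as you note, are of size $e^{CR}$ because both $|X|,|\nabh X|,|Y|$ and $\operatorname{vol}(B_{2R})$ grow exponentially. Shrinking the time interval does not touch this: the size of the cutoff error is independent of the length of $[\tau_0,\tau_*]$, so iterating over short subintervals does not let you absorb it. Using a cutoff with $|\nabla\eta_R| \lesssim 1/R$ also does not help, since a polynomial gain cannot offset an exponential loss. The missing idea is to \emph{pre-weight before cutting off}: one replaces $X,Y$ with $\wt{X}=e^{-3B\rho}X$, $\wt{Y}=e^{-3B\rho}Y$, where $\rho$ is the distance-like function of Lemma \ref{lem:distapprox} and $B\geq\max\{L_1,V_0\}$ dominates both the growth exponent $L_1$ in \eqref{eq:xygrowth} and the volume growth exponent $V_0$ in \eqref{eq:volgrowth}. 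The weighted sections still vanish at $\tau=0$, still satisfy a system of the form \eqref{eq:pdeode2nd} (with a constant depending on $B$ via the bounded derivatives of $\rho$), and, critically, decay so that the cutoff error terms from $\wt{X}_R=\phi_R\wt{X}$ are of size $e^{-2BR}\chi_{\supp\nabla\phi_R}$ and therefore contribute only $O(e^{-2BR})$ after integration — negligible as $R\to\infty$. This converts the exponentially large error into an exponentially small one, and it is precisely what allows the frequency bound and the logarithmic convexity estimate to pass to the limit. Without some such weighting, the truncation-and-limit step you flag as ``requiring the most care'' cannot be closed.

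One smaller structural note: a two-sided Lipschitz bound $|\ddtau\log\Ec|\leq C(1+\Nc)$ is more than is needed; only the upper bound $\ddtau\log\Ec\leq C(1+\Nc)$ is used, and it suffices to propagate the lower bound $\Ec(\tau)\geq e^{-C(N_0+1)(\omega-\tau)}\Ec(\omega)$ down to the left endpoint where $\Ec$ vanishes.
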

We first give the proof in the technically simpler case that $M$ is compact.
\begin{proof}{\em (Compact Case.)}
The quantities $\Ec = \Ec(X, Y)$ and $\Fc = \Fc(X)$
are differentiable on $[0, \Omega]$ and $\Ec(0) = 0$. Consequently, the set
$A = \{\,\tau\in (0, \Omega]\,|\, \Ec(\tau) > 0\,\}$ is open; if  $A =\emptyset$, there is nothing to prove. 
Otherwise, writing $\alpha = \inf A$, there is $\omega > \alpha$ such that $\Ec(\tau) > 0$ on $(\alpha, \omega]$. 
Note that, since $\Ec(0) = 0$, we must have $\Ec(\alpha) = 0$.
On $(\alpha, \omega]$, we define $\Nc \dfn \Fc/\Ec$ as in the preceding section.

By Lemma \ref{lem:freqbound}, the derivative of $\Nc$ satisfies
\begin{equation}\label{eq:nevlower}
\dot{\Nc} \geq   - \left(\frac{\|\Lc_B X\|^2 + \|\partial_{\tau}Y\|^2}{2\Ec}\right) + \frac{\Ic}{\Ec^2},
\end{equation}
where $\Ic = I_2\Ec -I_1\Fc$ and $I_1$, $I_2$ are the error terms appearing in Lemma \ref{lem:intident}.  Since $M$ is compact, and $\|\nabh X\|^2 \leq \lambda^{-1}\Fc$,
there is a constant $C = C(\lambda, L_0)$ such that $I_2 \geq  -C(\Ec +\Fc)$ and $I_1 \leq C\Ec$ on $[0, \Omega]$.   
Thus, using \eqref{eq:pdeode2nd}, for a further enlarged $C > 0$
depending on $C_0$, we have
\[
 \dot{\Nc} \geq   -C(\Nc + 1).
\]
It follows that $\Nc(\tau) \leq e^{C\omega}(\Nc(\omega) + 1)\dfn N_0$ for all $\tau \in (\alpha, \omega]$.

On the other hand, using \eqref{eq:pdeode2nd} with the first identity in \eqref{eq:l2ev} and the compactness of $M$, we have
\[
    \dot{\Ec} \leq C\Ec + 2\Fc + \|\Lc_B X\|^2 + \|\partial_{\tau} Y\|^2 
  \leq C(\Ec + \Fc)
\]
on $(\alpha, \omega]$
for some $C= C(\lambda, C_0, L_0)$. It follows that $\dot{\widehat{\log \Ec}} \leq C(N_0+1)$, whence
$\Ec(\omega) \leq \Ec(\tau)e^{C(N_0+1)(\omega-\tau)}$ for any $\alpha < \tau \leq \omega$.  Sending $\tau \searrow \alpha$, 
we conclude that $\Ec(\alpha) > 0$, a contradiction. Therefore
$A =\emptyset$ and $\Ec(\tau) \equiv 0$ on $[0, \Omega]$.
\end{proof}

\subsection{Noncompact case}
The crux of the proof in this case is again Lemma \ref{lem:freqbound}; we will just need to cut-off the sections $X$ and $Y$
in space and condition them with a suitably rapidly decaying function to ensure the finiteness
of the various integral quantities and the validity of the manipulations we need to perform.

First, we observe that, under the assumptions of Theorem \ref{thm:bu2ndorder},
 $\Rc(g(0))$ is bounded below and $|b(x, \tau)|\leq L_0$ on $M\times [0, \Omega]$. Therefore, the metrics $g(\tau)$
are uniformly bounded and,
by the Bishop-Gromov theorem, 
there is a constant $V_0 = V_0(n, L_0, \Omega)$ such that 
\begin{equation}\label{eq:volgrowth}
    \operatorname{vol}_{g(\tau)}(B_{g_0}(x_0, r)) \leq V_0e^{V_0r}
\end{equation}
for any $x_0 \in M$, $r> 0$, and $\tau\in [0, \Omega]$. 

Next, we record a slight extension to a lemma from \cite{ChowRFV2P3} (cf. \cite{Tam}) concerning 
the existence of smooth distance-like functions with controlled derivatives 
up to a certain prescribed order.
\begin{lemma}\label{lem:distapprox} Let $m\geq 0$ and $(M, g(\tau))$ be a smooth family of complete Riemannian manifolds with 
$\pdtau g = b$. Assume that there is a constant $L$ such that 
\[
\sup_{M\times[0, \Omega]}\sum_{k=0}^{m-1}|\nabla^{(k)}b|\leq L, \quad
\sup_{M\times[0, \Omega]}\sum_{k=0}^{m-2}|\nabla^{(k)}\Rm|\leq L. 
\]
Then, there exists a smooth function $\rho: M\to (0, \infty)$
and positive constants $C_1 = C_1(n, L, \Omega)$, $C_2=C_2(n, m, L, \Omega)$ such that
\begin{align}
\label{eq:rhocomp}
  &C_1^{-1}(1 + r_0(x)) \leq \rho(x) \leq C_1(1 + r_0(x)), \quad \sup_{M\times[0, \Omega]} \sum_{k=0}^{m}|\nabla^{(k)} \rho| \leq C_2.
\end{align}
Here, as before, $r_0(x) = d_{g(0)}(x, x_0)$ for some fixed point $x_0$ of $M$, and $|\cdot| = |\cdot|_{g(\tau)}$,
$\nabla = \nabla_{g(\tau)}$, and $\Rm = \Rm(g(\tau))$.
\end{lemma}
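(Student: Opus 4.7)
The strategy is to carry out the construction first at time $\tau = 0$ using a classical Greene--Wu/Shi-type smoothing of the $g(0)$-distance function $r_0$, and then to transfer the resulting bounds on $\rho$ to each time $\tau \in [0, \Omega]$ by comparing the connections $\nabla_{g(\tau)}$ and $\nabla_{g(0)}$. First I would observe that, since $|b| \leq L$ on $M \times [0, \Omega]$, the metrics $g(\tau)$ are uniformly equivalent to $g(0)$ with constants depending only on $L$ and $\Omega$; in particular, $d_{g(\tau)}(\cdot, x_0)$ is uniformly comparable to $r_0$.

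\textbf{Step 1 (construction at $\tau=0$).} Using the $k=0$ hypothesis $|\Rm(g(0))| \leq L$ together with the higher-derivative bounds $|\nabla^{(k)}\Rm| \leq L$ for $0 \leq k \leq m-2$ (at $\tau = 0$), I would invoke the standard smoothing result (as in the cited Chow et al.~and Tam references) to produce a smooth $\rho \colon M \to (0, \infty)$ with $C_1^{-1}(1+r_0) \leq \rho \leq C_1(1+r_0)$ and
\[
\sup_M \sum_{k=0}^{m} |\nabla^{(k)}_{g(0)} \rho|_{g(0)} \leq C(n, m, L).
\]

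\textbf{Step 2 (transfer to $\tau > 0$).} Set $h(\tau) \dfn g(\tau) - g(0) = \int_0^\tau b(s)\, ds$ and $\Delta\Gamma(\tau) \dfn \Gamma(\tau) - \Gamma(0)$, which is schematically $g^{-1}(\tau) * \nabla_{g(0)} h(\tau)$. Iterating the standard identity relating the two connections,
\[
\nabla^{(k)}_{g(\tau)} \rho \;=\; \nabla^{(k)}_{g(0)} \rho \;+\; \sum_{\text{partitions}} \nabla^{(j_0)}_{g(0)} \rho * \nabla^{(j_1)}_{g(0)} \Delta\Gamma * \cdots * \nabla^{(j_p)}_{g(0)} \Delta\Gamma,
\]
where each $j_i \leq k - 1$, I would reduce the problem to bounding $|\nabla^{(j)}_{g(0)} \Delta\Gamma|_{g(0)}$ for $0 \leq j \leq m-1$, and hence to bounding $|\nabla^{(j)}_{g(0)} b(s)|_{g(0)}$ for $0 \leq j \leq m-1$ uniformly in $s$.

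\textbf{Step 3 (inductive bounds on $\nabla^{(j)}_{g(0)} b$).} This is the main obstacle. The hypothesis controls $|\nabla^{(j)} b|$ and $|\nabla^{(j)} \Rm|$, but with respect to the time-dependent connection $\nabla = \nabla_{g(\tau)}$. To convert these into $\nabla_{g(0)}$-bounds I would induct on $j$: the difference $\nabla^{(j)}_{g(\tau)} - \nabla^{(j)}_{g(0)}$, applied to $b$, is a polynomial expression in $\nabla^{(i)}_{g(0)} \Delta\Gamma$ for $i \leq j-1$ and $\nabla^{(i)}_{g(0)} b$ for $i \leq j-1$, and commuting covariant derivatives on these tensors produces curvature correction terms controlled by $|\nabla^{(i)} \Rm|_{g(0)}$ for $i \leq m-2$, which are themselves bounded inductively by the same scheme. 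Integrating in $s$ then yields $|\nabla^{(j)}_{g(0)} h(\tau)|_{g(0)} \leq C(n, j, L, \Omega)$ for $j \leq m - 1$ on $M \times [0, \Omega]$. Feeding these bounds back into Step 2 and using the equivalence of $g(0)$ and $g(\tau)$ norms on tensors yields $\sup |\nabla^{(k)}_{g(\tau)} \rho|_{g(\tau)} \leq C_2(n, m, L, \Omega)$ for $0 \leq k \leq m$, completing the proof.

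The bulk of the work lies in Step 3's bookkeeping: the commutator/curvature terms proliferate, and one must verify that the given derivative orders ($m-1$ for $b$ and $m-2$ for $\Rm$) are exactly what is needed to close the induction and obtain the claimed $C^m$ control on $\rho$.
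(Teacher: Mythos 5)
Your Step 1 agrees with the paper's: invoke the Chow et al.\ construction (Section 26.4) at $\tau = 0$ under the curvature-derivative hypotheses alone. For the transfer to $\tau > 0$, however, you take a genuinely different and more roundabout route than the paper does. The paper's argument works entirely at time $\tau$: since $\rho$ is $\tau$-independent, $\partial_\tau\bigl(\nabla^{(k)}_{g(\tau)}\rho\bigr) = [\partial_\tau, \nabla^{(k)}]\rho$, which is schematically a sum of $\nabla^{(l)}(\partial_\tau\Gamma)\ast\nabla^{(i)}\rho$ with $l+i = k-1$ and $i\geq 1$, hence of $\nabla^{(p)}b\ast\nabla^{(i)}\rho$ with $p+i = k$, $1\leq p\leq k-1$, $1\leq i\leq k-1$, all evaluated at time $\tau$. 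The hypothesis bounds $\nabla^{(p)}b$ directly (no conversion needed), bounds on $\nabla^{(i)}\rho$ for $i<k$ come from the induction, and a Gronwall estimate in $\tau$ closes the argument. Your route --- convert $\nabla_{g(\tau)}$ to $\nabla_{g(0)}$ via $\Delta\Gamma$, then bound $\nabla^{(j)}_{g(0)}\Delta\Gamma$ by bounding $\nabla^{(j)}_{g(0)}b$ --- creates exactly the work that the paper's formulation avoids, namely converting the given time-$\tau$ derivative bounds on $b$ into time-$0$ ones.

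Within your route there are a few gaps to be aware of. In Step 2 the exponent bookkeeping is off: since $\rho$ is a scalar, the expansion of $\nabla^{(k)}_{g(\tau)}\rho - \nabla^{(k)}_{g(0)}\rho$ involves $\nabla^{(j)}_{g(0)}\Delta\Gamma$ only for $j\leq k-2$, not $j\leq k-1$; with your stated range the subsequent reduction to $\nabla^{(j)}_{g(0)}b$ for $j\leq m-1$ would actually require $j\leq m$, which the hypothesis does not supply, so the tighter count is essential. In Step 3, the appeal to curvature correction terms from commuting derivatives is a red herring: expanding $\nabla^{(j)}_{g(\tau)}b-\nabla^{(j)}_{g(0)}b$ requires only Leibniz applied to the difference of connections, not any reordering of covariant derivatives, so no $\Rm$ terms appear (the curvature hypothesis feeds into Step 1, not Step 3). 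Finally, the top-order bound on $\nabla^{(m-1)}_{g(0)}b$ is not obtained by a direct integration: the conversion introduces $\nabla^{(m-2)}_{g(0)}\Delta\Gamma(\tau)$, whose leading part is $\int_0^\tau \nabla^{(m-1)}_{g(0)}b(s)\,ds$, i.e.\ an integral of the very quantity you are trying to bound; you need a Gronwall step here, which you gloss over. None of these is fatal, but they are precisely the sort of tedious bookkeeping that the paper's direct $\partial_\tau$-evolution argument is designed to sidestep.
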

\begin{proof}
  In Section 26.4 of \cite{ChowRFV2P3} it is shown that, under the assumptions on the curvature tensor alone,
there is a smooth function $\rho$ satisfying the first condition in \eqref{eq:rhocomp} and the second condition at $\tau = 0$. 
The uniform bound on $|b|$ then
implies the uniform equivalence of the metrics $g(\tau)$ and the bounds on $|\nabla\rho|_{g(\tau)}$ for all $\tau$. 
We can then control
$|\nabla^{(k)} \rho|_{g(\tau)}$ for $1 < k \leq m$ inductively with bounds
on $\nabla^{(l)}\rho$
and $\nabla^{(l)}b$ for $l\leq k-1$, since the latter quantities control the evolution of the connection.
\end{proof}

\subsubsection{Proof of Theorem \ref{thm:bu2ndorder} in the noncompact case} We will use a cut-off argument 
to bound a localized version of $\Nc = \Fc/\Ec$
with some additional error. Our 
assumptions do not, a priori, preclude the possibility that $X(\tau_0)$ and $Y(\tau_0)$ may vanish identically on some
open set at some $\tau_0$, so we must choose the size of the support of our cut-off function with a little care.

\begin{proof} Define $B_R \dfn B_{g(0)}(x_0, R)$ and let $\rho$ be the function guaranteed by
 Lemma \ref{lem:distapprox} with $m=2$. Using $\rho$ we can construct 
a cut-off function $\phi_R\in C^{\infty}_c(M, [0, 1])$ satisfying
\[
  \phi_R \equiv 1\ \mbox{on}\ B_{R}, \quad \supp \phi_R \subset B_{2R}, \quad 
\sup_{M\times[0, \Omega]}\left\{ |\nabla \phi_R| + |\nabla\nabla \phi_R| \right\}\leq C_3.
\]

As we have observed in \eqref{eq:volgrowth}, there exists a constant $V_0$ such that 
$\operatorname{vol}_{g(\tau)}(B_R)\leq V_0 e^{V_0R}$ for all $\tau$.  Define $B = \max\{L_1, V_0\}$,
and put
\[
\Xt \dfn e^{-3B\rho}X, \quad \Yt \dfn e^{-3B\rho}Y,\quad \Xt_{R} \dfn \phi_R\Xt,\quad  \Yt_{R} \dfn \phi_R\Yt.  
\]
Then, $\Xt_R(\cdot, 0) = 0$ and $\Yt_R(\cdot, 0) = 0$ for any $R > 0$,
\[
  \sup_{R > 0}\sup_{M\times[0, \Omega]} e^{2B\rho}\left\{|\Xt_R| + |\nabh\Xt_R| + |\Yt_R|\right\} < C_4,
\]
and
\begin{align}
\label{eq:xytbd}
\begin{split}
 \left|\partial_{\tau}\Xt + \Box \Xt_R\right| + \left|\partial_{\tau}\Yt_R\right| &\leq 
C(|\Xt_R| + |\nabh \Xt_R| + |\Yt_R|) + Ce^{-2BR}\chi_{\supp\nabla\phi_{R}}
\end{split}
\end{align}
for some $C$ which depends on $n$, $B$, $C_0$, $C_3$ and $C_4$, but is independent of $R$.

The energy quantities $\Ec \dfn \Ec(\Xt, \Yt)$, $\Fc \dfn \Fc(\Xt)$, $\Ec_{R} \dfn \Ec(\Xt_R, \Yt_R)$, 
and $\Fc_R \dfn \Fc(\Xt_R)$
are then well-defined and differentiable on $[0, \Omega]$. As before, the set $A = \{\tau\in (0, \Omega]\,|\,\Ec(\tau) > 0\}$
is open, and, if it is nonempty, we can find $0\leq \alpha < \omega \leq \Omega$ such that
$\Ec(\tau) > 0$ on $(\alpha, \omega]$ and $\Ec(\alpha) = 0$.

Now fix $a\in (\alpha, \omega)$.  We will continue to use $C$ to denote a sequence of large constants that may 
depend on the parameters in the assumptions of Theorem \ref{thm:bu2ndorder} but are independent
of $a$ and $R$. Note that there is $R_a\geq 1$ such that $\Ec_{R}(\tau) > 0$ for all $\tau\in [a, \omega]$
provided $R \geq R_a$.  Using \eqref{eq:xytbd} and that $\operatorname{vol}_{g(\tau)}(\supp \nabla\phi_{R}) \leq Ce^{2BR}$, 
we have
\begin{align*}
 \|\Lc_B \Xt_{R}\|^2 + \|\partial_{\tau}\Yt_{R}\|^2 &\leq C(\Ec_{R} + \Fc_{R}) + Ce^{-2BR}.
\end{align*}
Using this inequality and controlling $\Ic$ via \eqref{eq:ubounds} as in the compact case, Lemma \ref{lem:freqbound} implies that
$\Nc_{R} \dfn \Fc_{R}/ \Ec_{R}$ satisfies
\[
 \dot{\Nc}_{R} \geq -C(\Nc_{R} + 1) + Ce^{-2BR}\Ec_{R}^{-1}.
\]
and hence that
\begin{align*}
  \frac{d}{d\tau}\left(e^{C\tau}(\Nc_{R}(\tau)+ 1)\right) \geq - Ce^{C\tau-2BR}\Ec_{R}^{-1}(\tau),
\end{align*}
on $[a, \omega]$ for all $R \geq R_a$.  Since $\Nc_{R}(\omega)\to \Nc(\omega)$ as $R\to\infty$, 
we have $\Nc_{R}(\omega) \leq N_0$ for some $N_0$ and this implies the estimate
\begin{align*}
  \Nc_{R}(\tau) + 1 &\leq e^{C(\omega - \tau)}\left(\Nc_{R}(\omega) + 1\right)
	+ Ce^{-2BR}\int_{\tau}^{\omega}e^{C(s-\tau)}\Ec_{R}^{-1}(s)\,ds\\
    &\leq C(N_0 +1) + Ce^{-2BR}\int_{\tau}^{\omega}\Ec_{R}^{-1}(s)\,ds,
\end{align*}
for $\tau$ and $R$ in the same range.  We must control the new error term.  Define 
$Q_R(\tau) \dfn \int_{\tau}^{\omega}\Ec_{R}^{-1}(s)\,ds$, and observe that $Q_R(\tau) \leq Q_R(a) < \infty$  and
\[
  \lim_{R\to\infty} Q_R(\tau) = \int_{\tau}^{\omega}\Ec^{-1}(s)\,ds < \infty
\]
on $[a, \omega]$ for all $R\geq R_a$.  Also, define $P_{R, a} \dfn N_0 + 1 +  e^{-2BR}Q_R(a)$
and note for later that $\lim_{R\to\infty} P_{R, a} = N_0 +1$, a finite nonzero limit independent of $a$.

If we apply the first identity in \eqref{eq:l2ev} to $\dot{\Ec}_{R}$
and use \eqref{eq:xytbd} again with our above inequality for $\dot{\Nc}_{R}$,
we then have
\begin{align*}
\frac{d}{d\tau}\bigg\{e^{-CP_{R, a}\tau}\Ec_{R}(\tau)\bigg\} 
\leq Ce^{-CP_{R, a}\tau - 2BR}
\end{align*}
on $[a, \omega]$ and hence 
\[
  e^{-CP_{R, a}(\omega -a)} \Ec_{R}(\omega) \leq \Ec_{R}(a) + \frac{e^{-2BR}}{P_{R, a}}\left(1 - e^{-CP_{R, a}(\omega -a)}\right).
\]
for all $R \geq R_a$.
Sending $R\to\infty$, we obtain
\[
 e^{-C(N_0 + 1)(\omega-a)}\Ec(\omega) \leq \Ec(a)\,
\]
which implies  $\Ec(\omega) = 0$ upon sending $a\searrow \alpha$. Therefore we must have $A = \emptyset$.
\end{proof}

\section{Applications to geometric flows: the prolongation procedure}
\label{sec:2ndorderapps}

As we have already mentioned, the lack of short-time existence for backward parabolic problems
means that the problem of backward uniqueness for geometric flows such as the Ricci flow cannot be simply reduced
to one for an associated strictly parabolic system by a standard implementation of the DeTurck trick.
In our prior work on the Ricci flow \cite{KotschwarRFBU}, \cite{KotschwarRFHolonomy}, and building on an idea of 
Alexakis \cite{Alexakis} (see also \cite{WongYu}), we demonstrated that, appropriately reformulated, the problem can nevertheless be attacked
by essentially the same methods for strictly parabolic equations.

The philosophy behind this reformulation is that the curvature tensors (and their covariant derivatives) associated
to the solutions of geometric flows often themselves satisfy strictly parabolic equations.  
Since the elliptic operators in these curvature equations typically depend in a nonlinear fashion on the solution metrics,
the difference of the curvature tensors associated to two different solutions
will not, in general, satisfy a strictly parabolic equation itself. Nevertheless, 
it is possible in many cases to add logically redundant
lower-order differences of the metric and higher-order differences of the covariant derivatives of the curvature tensors to obtain
a prolonged system which satisfies a closed and effectively parabolic system of inequalities.
We illustrate this procedure now on three examples: on the Ricci and cross-curvature flows in this section,
and on the $L^2$-curvature flow in the next.
\subsection{The Ricci flow} 
Assume that $g$ and $\gt$ are complete solutions of uniformly bounded curvature to the Ricci flow
\begin{equation}\label{eq:rf}
\pdt g = -2\Rc(g) 
\end{equation}
on $M\times [0, \Omega]$ which agree at $t=0$. For our first example, we will review the construction in Section 2 of \cite{KotschwarRFBU}. 
It will be convenient to define $\tau = \Omega - t$ and consider $g= g(\tau)$ and $\gt = \gt(\tau)$ as solutions
to the backward Ricci flow with $g(0) = \gt(0)$.

The difference $X^{(k)} = \nabla^{(k)}\Rm - \nabt^{(k)}\Rmt$ of the curvature tensors will satisfy an equation
of the form
\[
    \left(\pdtau + \Delta\right)X^{(k)}  = (\Deltat - \Delta)\nabt^{(k)}\Rmt + P^k_2(\Rm) - \tilde{P}^k_2(\Rmt)
\]
where $P^k_l(U)$ (respectively, $\tilde{P}^k_l(U)$) represents a linear combination of terms
formed by contraction by $g$ (respectively, $\gt$) from tensor products of the form
 $\nabla^{(i_1)}U\otimes\nabla^{(i_2)}U\otimes \cdots \otimes \nabla^{(i_l)}U$ (respectively, 
$\nabt^{(i_1)}U\otimes\nabt^{(i_2)}U\otimes \cdots \otimes \nabt^{(i_l)}U$) where
$i_1 + i_2 + \cdots + i_l = k$.  

The key observation is that, \emph{for all $k$}, the first term on the right-hand side
can be expressed as a polynomial of the form
\[
  (\Delta - \Deltat)\nabla^{(k)}\Rmt = Y^{(0)} \ast_g T_0 + Y^{(1)} \ast_g T_0 + Y^{(2)} \ast_g T_3
\]
where $Y^{(0)} = g - \gt$, $Y^{(1)} = \nabla - \nabt$, $Y^{(2)} = \nabla Y^{(2)}$ and the $T_i$ are (bounded) catch-all factors
involving $Y^{(1)}$ and $\nabt^{(l)}\Rmt$ for $l=0, 1, 2, \ldots, k+2$.    The quantities
$Y^{(i)}$, in turn, for $i =0, 1, 2$, can be controlled in an ordinary differential sense by $X^{(0)}$, $X^{(1)}$
and $\nabla X^{(1)}$. Thus we add $X^{(1)}$, $Y^{(0)}$, $Y^{(1)}$, and $Y^{(2)}$ to our original system consisting of
$X^{(0)}$ to obtain a closed system of inequalities.

This is made precise in \cite{KotschwarRFBU}, where it is shown that, if two solutions $g$ and $\gt$ agree at $\tau= 0$
then, for each $\delta > 0$, the family of sections $X = (X^{(0)}, X^{(1)})$ and $Y = (Y^{(0)}, Y^{(1)}, Y^{(2)})$ 
of $\Xc\dfn T_3^1(M)\oplus T_4^1(M)$ and $\Yc \dfn T_{2}^0(M)\oplus T_{2}^1(M)\oplus T_{3}^1(M)$,
together satisfy the system
\begin{align*}
  \left|\partial_{\tau}X + \Delta X\right| \leq C_{\delta}(|X| + |Y|), \quad
  \left|\partial_{\tau}Y\right| \leq C_{\delta}(|X| +|\nabla X| + |Y|)
\end{align*}
on any $[0, \Omega-\delta]$.  Here the dependence of the constants $C_{\delta}$ on $\delta$
is an artifact of the use of Shi's estimates \cite{Shi} to bound the coefficients (and can be eliminated if $M$ is compact).  
These coefficients depend on the derivatives of the curvature tensors of the solutions of up to third-order,
and under the assumption of bounded curvature, Shi's estimates ensure that these derivatives are bounded
uniformly for $t\in (\delta, \Omega]$, that is, for $\tau \in [0, \Omega -\delta)$.  To prove that $g \equiv \gt$, 
we apply Theorem \ref{thm:bu2ndorder} to the above system
on $M\times [0, \Omega -\delta]$ for any fixed $\delta$, and then send $\delta\searrow 0$.  We obtain the following
result.
\begin{theorem}[Theorem 1.1, \cite{KotschwarRFBU}]
 If $g$ and $\gt$ are complete solutions to \eqref{eq:rf} with uniformly bounded curvature,
then $g(\Omega) = \gt(\Omega)$ implies that $g \equiv \gt$ on $[0, \Omega]$.
\end{theorem}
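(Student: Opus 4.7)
The plan is to reduce the Ricci-flow problem to an application of Theorem \ref{thm:bu2ndorder} via the prolongation procedure sketched just before the statement. First I would time-reverse, setting $\tau = \Omega - t$, so that $g(\tau)$ and $\gt(\tau)$ become complete solutions of the backward Ricci flow $\pdtau g = 2\Rc(g)$ on $[0,\Omega]$ with matching data at $\tau = 0$. Taking $\nabh = \nabla$ (the Levi-Civita connection of $g$, lifted to the relevant tensor bundles) and $\Lambda^{ij} = g^{ij}$, the operator $\Box$ becomes the rough Laplacian $\Delta$. The passage preceding the theorem then gives me the prolonged sections $X = (X^{(0)}, X^{(1)})$ of $\Xc = T^1_3(M)\oplus T^1_4(M)$ and $Y = (Y^{(0)}, Y^{(1)}, Y^{(2)})$ of $\Yc = T^0_2(M)\oplus T^1_2(M)\oplus T^1_3(M)$, satisfying the closed system
\[
 \left|\pdtau X + \Delta X\right| \leq C_{\delta}(|X| + |Y|), \qquad
 \left|\pdtau Y\right| \leq C_{\delta}(|X| + |\nabla X| + |Y|)
\]
on $M\times [0,\Omega-\delta]$ for each $\delta > 0$, with $X(\cdot,0) \equiv 0$ and $Y(\cdot,0) \equiv 0$.

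Next, I would verify the hypotheses of Theorem \ref{thm:bu2ndorder} on $M\times [0,\Omega-\delta]$. Completeness and uniform equivalence of the metrics $g(\tau)$ follow from the uniform curvature bound via standard Ricci-flow distance estimates. The bounds \eqref{eq:ubounds} on $b = 2\Rc$, $\nabla b$, $\Lambda = g$, $\nabla \Lambda$, $\pdtau\Lambda$, and $\Rm$ all reduce to bounds on the curvature tensor and its first few covariant derivatives, and the commutator $[\pdtau, \nabh]$ is controlled in the same way since its components are linear in $\nabla\Rc$. All such bounds on $[0,\Omega-\delta]$ are guaranteed by Shi's derivative estimates \cite{Shi} applied to both $g$ and $\gt$ starting from $t = 0$. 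The growth condition \eqref{eq:xygrowth} is in fact trivial, since $|X| + |\nabla X| + |Y|$ is uniformly bounded on $M\times [0,\Omega-\delta]$ by the same estimates.

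Theorem \ref{thm:bu2ndorder} then forces $X \equiv 0$ and $Y \equiv 0$ on $M\times [0,\Omega-\delta]$; in particular $Y^{(0)} = g - \gt \equiv 0$ there. Since $\delta > 0$ was arbitrary, $g(\tau) = \gt(\tau)$ for all $\tau \in [0,\Omega)$, and by continuity at $\tau = \Omega$ as well. Reverting to $t$ yields $g \equiv \gt$ on $[0,\Omega]$.

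The main technical friction lies in the $\delta$-dependence of the coefficients: the prolonged system genuinely involves up to third covariant derivatives of the curvature tensors of $g$ and $\gt$ (through the factors $T_i$ appearing in the decomposition of $(\Delta-\Deltat)\nabla^{(k)}\Rmt$), and the only a priori control on these derivatives near the forward initial time $t = 0$, equivalently $\tau = \Omega$, comes from Shi's estimates, whose constants blow up there. The remedy is precisely the exhaustion $\delta \searrow 0$ above; this is routine but is the step that must be handled with care, since a direct application of Theorem \ref{thm:bu2ndorder} on the full interval $[0,\Omega]$ would require uniform bounds that are simply not available. If $M$ is compact, the Shi estimates are unnecessary and the exhaustion can be skipped, giving the cleaner statement directly.
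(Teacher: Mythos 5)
Your proposal is correct and follows essentially the same route as the paper: time-reverse, use the prolonged sections $X = (X^{(0)}, X^{(1)})$ and $Y = (Y^{(0)}, Y^{(1)}, Y^{(2)})$ constructed in the discussion preceding the theorem, verify the hypotheses of Theorem \ref{thm:bu2ndorder} on $M\times[0,\Omega-\delta]$ using Shi's estimates (whose $\delta$-dependence is exactly the reason for the exhaustion), and then let $\delta\searrow 0$. Your remarks on why the growth condition \eqref{eq:xygrowth} is trivially satisfied and on the compact case are also consistent with the paper.
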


In \cite{KotschwarRFTA}, we prove that a complete solution to the Ricci flow
of bounded curvature is in fact \emph{analytic} in time for $t > 0$ (see also \cite{Shao1} for a related result). 

\subsection{Cross-curvature flow}
On a closed Riemannian three-manifold $(M^3, g)$ of sectional curvature of negative (respectively positive) sign, the Einstein
tensor $E_{ij} = R_{ij} -(R/2)g_{ij}$  is negative (respectively positive) definite. 
Write $E^{ij} = g^{ik}g^{jl}E_{kl}$.  If $V= V_{ij}$ is the two-tensor satisfying $V_{ik}E^{kj} = \delta^{j}_i$ and $P =\det(E^i_j)$, the \emph{cross-curvature} tensor
of $g$ is defined to be the tensor
\[
 \varXi_{ij} \dfn PV_{ij} =  -(1/2)E^{pq}R_{pijq}.
\]
For such manifolds, Hamilton and Chow \cite{ChowHamiltonXCF} define the \emph{cross-curvature flow}
as the evolution equation
\begin{align}
  \label{eq:xcf}
      \partial_t g &= -2\sigma \varXi(g), 
\end{align}
for a family of metrics $g(t)$ where $\sigma = \pm 1$ depending as the sectional curvature of $g(0)$ is positive
or negative.  Short-time existence for the flow is proven
in \cite{Buckland}.

As an equation for the metric, \eqref{eq:xcf} is fully nonlinear, however, the parabolic portion of the prolonged system we derive
will be effectively quasilinear. For our purposes, the key is that the curvature quantities
associated to the solutions satisfy strictly parabolic equations with respect to the family of strongly elliptic operators 
$\Box\dfn -\sigma E^{ij}\nabla_i\nabla_j$. (Since $\nabla_iE^{ij} = 0$, these operators are even self-adjoint.)
 In particular (see Lemma 17 of \cite{KotschwarHOCFU}), the tensor $V$ satisfies the equation
\[
 \partial_t V_{ij}  =  \Box V_{ij} +  (E^{al}E^{kb} - 2E^{ab}E^{kl})\nabla_kV_{ai}\nabla_l V_{bj} 
- Pg^{kl}(V_{ik}V_{jl} + V_{ij}V_{kl}).
\]
with analogous equations for its covariant derivatives $\nabla^{(k)}V$.

Neglecting the constant $\sigma$, 
the difference of the operators $\Box$ and $\widetilde{\Box}$ satisfy a formula of the form
\begin{align*}
 (\Box - \widetilde{\Box})W &= E^{ab}\nabla_a\left((\nabla_b -\nabt_b)W\right)
	  +E^{ab}(\nabla_a  -\nabt_a)\nabt_b W + (E^{ab} - \Et^{ab})\nabt_a\nabt_bW\\
	&= Y^{(1)} \ast T_1 + Y^{(2)}\ast T_2 + X^{(0)} \ast T_3
\end{align*}
where $X^{(0)}\dfn V - \Vt$, $Y^{(1)} \dfn \nabla -\nabt$, $Y^{(2)} \dfn \nabla Y^{(1)}$ and the $T_i$
denote bounded tensors depending on $g$, $\gt$ and $W$.  (For the third term on the right, 
we have used the identity $E^{ab} - \Et^{ab}= -E^{ak}\Et^{bl}V_{kl}$.)

Defining $X^{(1)} \dfn \nabla V - \nabt\Vt$ and $Y^{(0)} \dfn g - \gt$, and carrying out computations
either contained in or similar to those in our paper \cite{KotschwarHOCFU}, one can verify that there is a constant $C$ such that 
$X = (X^{(0)}, X^{(1)})$ and $Y = (Y^{(0)}, Y^{(1)}, Y^{(2)})$ satisfy
\[
  \left|\partial_{\tau}X + \Box X\right| \leq C(|X| + |\nabla X| +  |Y|), \quad
  \left|\partial_{\tau} Y\right| \leq C(|X| +|\nabla X| + |Y|)
\]
on $M\times [0, \Omega]$. The following theorem is thus a consequence of Theorem \ref{thm:bu2ndorder} above,
taking $\Lambda^{ij} = -\sigma E^{ij}$.

\begin{theorem}
 If $g$, $\gt$ are solutions to \eqref{eq:xcf} on $M^3\times[0, \Omega]$ with either positive or negative
sectional curvature and $g(\Omega) = \gt(\Omega)$, then $g(t) = \gt(t)$ for all $t \in [0, \Omega]$.  
\end{theorem}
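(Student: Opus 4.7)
The proof is essentially prepackaged by the discussion preceding the statement: the task is to verify that the prolonged system constructed there satisfies the hypotheses of Theorem \ref{thm:bu2ndorder} with $\Lambda^{ij} = -\sigma E^{ij}$, and then to apply that theorem in its compact form, since $M^3$ is closed. First I would reverse time via $\tau = \Omega - t$, so that $g(\tau)$ and $\gt(\tau)$ both solve $\pdtau g = 2\sigma\varXi(g)$ with $g(0) = \gt(0)$. The compactness of $M^3 \times [0,\Omega]$ together with the smoothness of the two solutions delivers all the uniform bounds in \eqref{eq:ubounds} at once.

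Next I would check ellipticity. Since $g(0)$ has sectional curvature of a single sign, its Einstein tensor is $\sigma$-definite, and smoothness of the flow ensures the same holds uniformly over the compact interval $[0,\Omega]$. Hence $\Lambda^{ij} = -\sigma E^{ij} \geq \lambda g^{ij}$ for some $\lambda > 0$, and because $\nabla_i E^{ij} = 0$ the operator $\Box$ is in fact self-adjoint; I would simply take $\nabh = \nabla$.

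Third, I would assemble the prolonged system as sketched in the excerpt: with $X = (X^{(0)}, X^{(1)})$ and $Y = (Y^{(0)}, Y^{(1)}, Y^{(2)})$ defined as above, the parabolic evolutions of $V$ and $\nabla V$ from \cite{KotschwarHOCFU}, the factorization of $(\Box - \widetilde{\Box})W$ displayed in the text (using $E^{ab} - \Et^{ab} = -E^{ak}\Et^{bl}V_{kl}$), and the routine algebraic/first-order control of the $Y^{(i)}$ by $X^{(0)}, X^{(1)}$, and $\nabla X^{(1)}$ together yield
\[
  |\pdtau X + \Box X| \leq C(|X| + |\nabla X| + |Y|), \qquad |\pdtau Y| \leq C(|X| + |\nabla X| + |Y|)
\]
on $M^3 \times [0,\Omega]$. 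Since $g(\Omega) = \gt(\Omega)$ translates to $X(0) \equiv Y(0) \equiv 0$, Theorem \ref{thm:bu2ndorder} forces $X \equiv 0$ and $Y \equiv 0$ on all of $[0,\Omega]$. In particular, $g - \gt = Y^{(0)} \equiv 0$, which is the desired conclusion.

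The main obstacle is verifying that the prolonged system genuinely closes without spawning higher-derivative terms in $Y$: one must check that each difference $(\Box - \widetilde{\Box})W$ appearing in the evolutions of $X^{(0)}$ and $X^{(1)}$ can be rearranged via the identity $E^{ab} - \Et^{ab} = -E^{ak}\Et^{bl}V_{kl}$ so that the only second-order occurrence of $Y$ lands in the component $Y^{(2)} = \nabla Y^{(1)}$ already carried by the system. The needed identities are the analogues of those used in the Ricci flow prolongation and are essentially recorded in \cite{KotschwarHOCFU}; beyond this closure step, the argument is mechanical.
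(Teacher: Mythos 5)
Your proposal matches the paper's argument: reverse time, take $\Lambda^{ij} = -\sigma E^{ij}$ with $\nabh = \nabla$, verify uniform ellipticity and the bounds \eqref{eq:ubounds} via compactness, close the prolonged system $X = (X^{(0)},X^{(1)})$, $Y=(Y^{(0)},Y^{(1)},Y^{(2)})$ using the factorization of $\Box - \widetilde{\Box}$ and the identity $E^{ab}-\Et^{ab}=-E^{ak}\Et^{bl}V_{kl}$, and then invoke Theorem \ref{thm:bu2ndorder}. This is precisely how the paper proceeds.
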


\section{The $L^2$-curvature flow and higher-order equations} \label{sec:l2}

The $L^2$-curvature flow, introduced by J.\ Streets in \cite{StreetsL2}, is the negative gradient flow of the 
functional $g\mapsto \int_M |\Rm(g)|^2\,d\mu_g$. The metric evolves according to the fourth-order equation
 \begin{equation}\label{eq:l2}
   \partial_t g_{ij} = 2\left(\Delta R_{ij} -\frac{1}{2}\nabla_i\nabla_jR\right) + 2R^{pq}R_{ipqj} -R^p_iR_{pj} + R_{i}^{pqr}R_{jpqr} 
-\frac{1}{4}|\Rm|^2g_{ij},
 \end{equation}
and the curvature tensor and its covariant derivatives satisfy a strictly parabolic equation of the form
\begin{equation}\label{eq:l2curvev}
\partial_t \nabla^{(k)}\Rm = -\Delta^{(2)} \nabla^{(k)} \Rm + P_2^{k+2}(\Rm) + P^{k}_3(\Rm). 
\end{equation}
See Proposition 4.3 in \cite{StreetsL2}. For our purposes, the key aspect of this equation
is that its principal part, the tensor bi-Laplacian, is a symmetric operator. Moreover, the following
result of Streets \cite{Streets4th} guarantees the instantaneous boundedness of all derivatives of the curvature under
the assumption of a uniform curvature bound.
\begin{theorem*}[\cite{Streets4th}, Theorem 1.3]
 If $(M^n, g(t))$ is a complete solution to \eqref{eq:l2}
 with $|\Rm|\leq K$ on $M\times [0, \Omega]$, then, for any $m\geq 1$,
\begin{equation}\label{eq:l2derbound}
  \sup_{M\times[0, \Omega]}t^{\frac{m+2}{4}}|\nabla^{(m)}\Rm|_{g(t)} \leq C
\end{equation}
for some $C = C(m, n, K)$.
\end{theorem*}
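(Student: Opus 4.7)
The plan is to prove \eqref{eq:l2derbound} by induction on $m$, adapting the Bernstein/Shi strategy to the fourth-order setting. Equation \eqref{eq:l2curvev} writes $\partial_t \nabla^{(k)}\Rm$ as $-\Delta^{(2)}\nabla^{(k)}\Rm + P_2^{k+2}(\Rm) + P_3^k(\Rm)$; the principal part is formally self-adjoint, so integration by parts against $\nabla^{(k)}\Rm\cdot\phi^N$ produces a favorable negative term controlling $\int|\nabla^{(k+2)}\Rm|^2\phi^N$ in any weighted $L^2$ energy. This structure is what drives the entire argument.

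The base case $m=0$ is the hypothesis. For the inductive step, fix $x_0\in M$, let $\phi\in C_c^{\infty}(B_{g(0)}(x_0,1))$ be a standard cutoff, and consider the weighted integral
\[
  \Phi_m(t) \dfn \sum_{k=0}^{m} A_k\int_M t^{(k+2)/2}|\nabla^{(k)}\Rm|^2\phi^N\, d\mu_{g(t)},
\]
with constants $A_k$ and exponent $N$ chosen below. Differentiating in $t$ and substituting \eqref{eq:l2curvev}, the leading term at level $k$ yields $-c_k A_k\int t^{(k+2)/2}|\nabla^{(k+2)}\Rm|^2\phi^N$. The polynomial terms $P_2^{k+2}(\Rm)$ and $P_3^k(\Rm)$, together with the commutators generated by integrating by parts four times, are absorbed as a small multiple of the next good term plus quantities controllable by $K$ and the inductive hypothesis via Young's inequality. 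The $t$-derivative of the weight generates the wrong-sign term $\tfrac{k+2}{2}t^{(k+2)/2-1}|\nabla^{(k)}\Rm|^2$; for $k\ge 1$ this is absorbed into the leading good term at level $k-1$ by taking the ratio $A_{k-1}/A_k$ large enough, and at $k=0$ it is directly dominated by $K^2$; derivatives falling on $\phi$ are absorbed by taking $N$ large. The outcome is a closed differential inequality $\Phi_m'(t)\le C\Phi_m(t) + C$, yielding a uniform local $L^2$ bound on $t^{(m+2)/4}\nabla^{(m)}\Rm$ inside the parabolic cylinder.

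The main obstacle is the absence of a pointwise maximum principle for the operator $\partial_t + \Delta^{(2)}$, which precludes the direct scalar Bernstein argument that underlies Shi's second-order estimates; one is forced to work at the level of $L^2$ integrals throughout. Having obtained local $L^2$ control on $\nabla^{(k)}\Rm$ at all orders $k \le m+j$ for any prescribed $j$, I would upgrade to pointwise control by a Moser-type iteration on shrinking parabolic subcylinders, invoking the local parabolic Sobolev inequality available under the two-sided Ricci control implied by $|\Rm|\le K$. This converts an $L^2$ bound on $\nabla^{(m)}\Rm$ into an $L^\infty$ bound in terms of higher $L^2$ norms already controlled by the same induction. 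Since each step depends only on $n$, $K$, and $m$, taking the supremum over $x_0\in M$ yields the uniform global estimate \eqref{eq:l2derbound}.
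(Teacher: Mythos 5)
The paper does not prove this statement; it is quoted verbatim from Streets \cite{Streets4th}, Theorem~1.3, and cited as a black-box input to the prolongation procedure in Section~\ref{sec:l2}. So there is no internal proof to compare against. That said, your sketch does capture the general strategy used for Shi-type derivative estimates in fourth-order flows: since the operator $\partial_t + \Delta^{(2)}$ admits no pointwise maximum principle, one works with $t$-weighted, cut-off energy functionals $\sum_k A_k \int t^{\alpha_k}|\nabla^{(k)}\Rm|^2\phi^N\,d\mu$, exploits the formal self-adjointness of the bi-Laplacian to produce good dissipative terms, and cascades errors downward by choosing the $A_k$ appropriately. This is indeed the backbone of the argument in \cite{Streets4th}.

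Two points need attention. First, an indexing slip: the good term generated at level $k$ controls $\int t^{(k+2)/2}|\nabla^{(k+2)}\Rm|^2\phi^N$, i.e.\ it jumps \emph{two} derivative orders, so the wrong-sign term $\tfrac{k+2}{2}t^{k/2}\int|\nabla^{(k)}\Rm|^2\phi^N$ arising from differentiating the weight at level $k$ must be absorbed into the good term at level $k-2$, not $k-1$. In particular, at $k=1$ there is no level $-1$, and an interpolation in $\nabla$ (with the $t$-weights tracked) is needed to close that case; the constants $A_k$ must be tuned with this two-step structure in mind. These are fixable but not purely cosmetic.

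Second, and more substantively, the proposed upgrade from local $L^2$ control to $L^\infty$ via Moser iteration is a genuine gap as written. Parabolic Moser iteration requires a local Sobolev inequality with a uniformly controlled constant on balls of a fixed scale, and a two-sided bound $|\Rm|\le K$ does \emph{not} supply one: the manifold may volume-collapse at small scales, which degenerates the Sobolev constant. (A two-sided Ricci bound is even weaker.) A related subtlety is that the speed $|\partial_t g|$ of the $L^2$-flow involves $\nabla^{(2)}\Rm$, so the uniform equivalence of the $g(t)$ and the regularity of your fixed cutoff $\phi$ in the evolving metric are not immediate consequences of $|\Rm|\le K$ either; they have to be bootstrapped in tandem with the derivative estimate itself. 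Streets' proof resolves the first issue by replacing the Moser step with Hamilton-style tensorial Gagliardo--Nirenberg interpolation inequalities, whose constants are independent of injectivity radius and volume lower bounds; alternatively one can run the energy scheme in $L^{2p}$ and send $p\to\infty$. Either way, the passage from integral to pointwise bounds is where the real work lies, and the appeal to a ``local parabolic Sobolev inequality available under\ldots $|\Rm|\le K$'' must be replaced by a mechanism that does not presuppose non-collapsing.
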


The main result of this section is the following theorem.
\begin{theorem}\label{thm:l2bu}
Suppose $g(t)$ and $\gt(t)$ are smooth complete solutions to \eqref{eq:l2}
on $M\times [0, \Omega]$ of uniformly bounded curvature.  If $g(\Omega) = \gt(\Omega)$,
 then $g\equiv \gt$ for all $t\in [0, \Omega]$.
\end{theorem}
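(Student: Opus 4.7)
The plan is to follow the prolongation template of Section~\ref{sec:2ndorderapps}: reverse time via $\tau = \Omega - t$, assemble a closed system of mixed parabolic-ordinary inequalities for a tuple of curvature and geometric differences, and apply the fourth-order analog of Theorem~\ref{thm:bu2ndorder} (which I would establish first, using the energy-quotient identities of Lemma~\ref{lem:intident} with $\Box \dfn \Delta^2$; the essential analytic inputs are the self-adjointness and strict positivity of the bi-Laplacian). Streets's estimate \eqref{eq:l2derbound} supplies uniform bounds on all covariant derivatives of $\Rm$ and $\Rmt$ on every slab $M\times[0, \Omega - \delta]$, so, as for the Ricci flow, it suffices to run the argument on $[0, \Omega - \delta]$ and let $\delta \searrow 0$.

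For the prolongation itself, I would introduce the curvature differences $X^{(k)} \dfn \nabla^{(k)}\Rm - \nabt^{(k)}\Rmt$ for $0 \leq k \leq K$ and the geometric differences $Y^{(0)} \dfn g - \gt$ and $Y^{(l)} \dfn \nabt^{(l-1)}(\nabla - \nabt)$ for $1 \leq l \leq L$. Using \eqref{eq:l2curvev}, each $X^{(k)}$ satisfies an equation of the schematic form
\[
  \left(\pdtau + \Delta^2\right) X^{(k)} = (\Deltat^2 - \Delta^2)\nabt^{(k)}\Rmt + \bigl(P^{k+2}_2(\Rm) - \tilde P^{k+2}_2(\Rmt)\bigr) + \bigl(P^{k}_3(\Rm) - \tilde P^{k}_3(\Rmt)\bigr).
\]
Expanding $\Delta^2 - \Deltat^2$ by writing $\nabla = \nabt + (\nabla - \nabt)$ in each of its four connection factors and $g = \gt + Y^{(0)}$ in its two metric factors shows that the first term on the right is a polynomial in $Y^{(l)}$ for $0 \leq l \leq 4$, with coefficients built from uniformly bounded tensors constructed from $g$, $\gt$, and $\nabt^{(j)}\Rmt$ for $j \leq k + 4$; the polynomial curvature differences likewise decompose into sums of $X^{(j)}$ for $j \leq k+2$ with bounded coefficients, since all but one factor per term can be frozen at one of the two solutions. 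In parallel, each $Y^{(l)}$ satisfies a $\tau$-ODE whose right-hand side involves the differences $\nabla^{(j)}\Rm - \nabt^{(j)}\Rmt$ for $j \leq l + 2$ multiplied by lower-order $Y$'s and bounded curvature tensors. Choosing $L = 4$ and $K = L + 2 = 6$ therefore closes the system to
\begin{align*}
  \left|\pdtau X + \Delta^2 X\right| &\leq C_{\delta}\bigl(|X| + |Y|\bigr), &
  \left|\pdtau Y\right| &\leq C_{\delta}\bigl(|X| + |Y|\bigr),
\end{align*}
on $M\times[0, \Omega - \delta]$, with all of $X$, $Y$, and the relevant covariant derivatives of $X$ uniformly bounded there by \eqref{eq:l2derbound} and the uniform equivalence of $g$ and $\gt$; the growth condition required by the fourth-order theorem is thus trivial.

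The fourth-order backward uniqueness theorem then forces $X \equiv 0$ and $Y \equiv 0$ on $M\times[0, \Omega - \delta]$; in particular $Y^{(0)} = g - \gt \equiv 0$ there, and letting $\delta \searrow 0$ finishes the proof. The main obstacle is the algebraic verification that $(\Delta^2 - \Deltat^2)\nabt^{(k)}\Rmt$ is faithfully expressible as an $|Y|$-bounded polynomial, carrying up to three covariant derivatives onto $\nabla - \nabt$ without incidentally picking up a derivative of $X$, and that the $\tau$-evolution of the highest $Y^{(l)}$ does not force an $X$-derivative to appear on the right-hand side beyond what the fourth-order theorem admits. Each commutator $[\nabt_i, \nabt_j]$ applied to $Y^{(l)}$ contributes only curvature-coefficient algebra and no fresh derivative, and each $\pdtau$ acting on a $Y^{(l)}$ raises the order of $X$ involved by exactly one, so both checks reduce to patient expansions in the spirit of Section~2 of \cite{KotschwarRFBU}.
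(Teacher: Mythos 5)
Your overall strategy is the same as the paper's — reverse time, prolong to a closed system, run a fourth-order energy-quotient argument on $[0,\Omega-\delta]$ using Streets's estimate \eqref{eq:l2derbound}, and let $\delta\searrow 0$ — but there is a genuine gap in the structure of the closed system you claim to obtain. You assert that taking $K=6$ yields
\[
  \left|\pdtau X + \Delta^{(2)} X\right| \leq C_{\delta}\bigl(|X| + |Y|\bigr),
\]
with no derivatives of $X$ on the right. That cannot be achieved, no matter how large $K$ is chosen. The nonlinear term $P^{k+2}_2(\Rm)$ in \eqref{eq:l2curvev} contributes a factor $\nabla^{(k+2)}\Rm$, so the difference of the evolution equations for $\nabla^{(K)}\Rm$ and $\nabt^{(K)}\Rmt$ will always produce $X^{(K+2)}$, which lies outside the system and can only be re-expressed as $\nabla^{(2)}X^{(K)}$ modulo $Y$-terms; similarly $X^{(K+1)}$ forces $\nabla X^{(K-1)}$. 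Enlarging $K$ only moves the problem up — the top two components will always pick up first and second covariant derivatives. This is precisely why the paper's closed inequality \eqref{eq:l2pdeode} reads $\left|\pdtau X - \Delta^{(2)}X\right| + |\pdtau Y| \leq C\sum_{p=0}^{2}|\nabla^{(p)}X| + C|Y|$, with $K=L=4$.

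This changes what the ``fourth-order theorem'' you plan to prove must accomplish. You cannot simply plug $\Box = \Delta^{(2)}$ into Lemma~\ref{lem:intident}, which is built around a second-order operator $\Lambda^{ij}\nabh_i\nabh_j$ with $\Fc = \int|\nabh X|^2_\Lambda$. Instead one must take $\Fc = \|\Delta X\|^2$, and the crucial new ingredients — absent from your sketch — are the interpolation inequality \eqref{eq:gn} $\|\nabla^{(l)}X\|^2 \leq C\|X\|^2 + \epsilon\|\nabla^{(k)}X\|^2$ and the G\aa rding-type equivalence \eqref{eq:gaarding} $\|\Delta X\|^2 \approx \|\nabla^{(2)}X\|^2$ modulo $C\|X\|^2$. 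It is precisely these that let the terms $\sum_{p=0}^{2}\|\nabla^{(p)}X\|^2$ be absorbed into $C(\Ec + \Fc)$ and make the frequency bound $\dot{\Nc}\geq -C(\Nc+1)$ go through. Without acknowledging the derivative terms on the right and supplying these two inequalities, the frequency argument does not close. Your concern in the last paragraph about ``not incidentally picking up a derivative of $X$'' is aimed only at the $(\Delta^{(2)}-\Deltat^{(2)})\nabt^{(k)}\Rmt$ piece, but the unavoidable source is the quadratic curvature term, and the resolution is to allow the derivatives and control them analytically, not to eliminate them algebraically.
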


\subsection{Prolongation}

Given two complete solutions $g$ and $\gt$ of bounded curvature on $M\times [0, \Omega]$ which agree at 
$t=\Omega$, the prolongation procedure can be carried out 
essentially as in the preceding sections. The difference of the bi-Laplacians $\Delta^{(2)}$ and $\Deltat^{(2)}$
is controlled by the difference $g-\gt$ of the metrics and its covariant derivatives 
of up to fourth order. The evolutions of these quantities, in turn, are controlled by the difference of the covariant derivatives
of $\Rm$ and $\Rmt$ of up to sixth order. 

Specifically, if we define $X^{(k)} \dfn \nabla^{(k)}\Rm -\nabt^{(k)}\Rmt$
and $Y^{(0)} \dfn g - \gt$, $Y^{(1)} \dfn \nabla - \nabt$, and $Y^{(k)} \dfn \nabla^{k-1}Y^{(1)}$ for $k=2, 3, \ldots$,
then the estimate \eqref{eq:l2derbound} and a computation (see, e.g., Lemma 7 of \cite{KotschwarHOCFU}) shows that, for all 
$0 < \delta < \Omega$, the sections 
$X \dfn (X^{(0)}, X^{(1)}, \ldots, X^{(4)})$ and $Y \dfn (Y^{(0)}, Y^{(1)}, \ldots, Y^{(4)})$
satisfy
\begin{equation}\label{eq:l2pdeode}
|\partial_{\tau}X - \Delta^{(2)} X| 
+ \left|\partial_{\tau} Y\right| \leq C\sum_{p=0}^2|\nabla^{(p)} X| + C|Y|,
\end{equation}
for some constant $C = C(\delta)$ on $(\delta, \Omega]$. 

\subsection{Proof of Theorem \ref{thm:l2bu}}
The argument is essentially the same as that for Theorem \ref{thm:bu2ndorder}, so we will focus here on the differences.
We first fix $\delta \in (0, \Omega)$, so that, in view of the bounds \eqref{eq:l2derbound},
the curvature tensors of $g$ and $\gt$ and all of their derivatives are uniformly bounded for $\tau \in [0, \Omega^{\prime}]$
where $\Omega^{\prime} =\Omega - \delta$. We will work on this interval, using 
$g(\tau)$ as a reference metric and defining $X$ and $Y$ as in the preceding section.

Under our assumptions, thanks to \eqref{eq:l2derbound}, the speed of the family of metrics $g(\tau)$ is uniformly bounded,
so the metrics $g(\tau)$ are uniformly equivalent on $[0, \Omega^{\prime}]$ and equation \eqref{eq:volgrowth} 
is therefore again satisfied for some $V_0 > 0$.  We can also again apply Lemma \ref{lem:distapprox} to obtain
 a smooth, proper exhaustion function $\rho$
  with uniform bounds on its $\nabla_{g(\tau)}$-covariant derivatives of order up to $4$.  Then, if $M$ is noncompact,
we can replace $X$ and $Y$ with $e^{-B\rho}X$ and $e^{-B\rho}Y$ where $B = B(n, V_0) > 0$ is chosen sufficiently large
to ensure that $X$ and $Y$ and all of their (bounded) derivatives are
$L^2(d\mu_{g(\tau)})$-integrable for each $\tau$.  This will ensure the validity of the integral operations
we will perform in what follows.  Moreover, equation \eqref{eq:l2pdeode} (with an enlarged constant $C$)
 will still hold for these modified $X$
and $Y$.

Next, we define the quantities
\[
 \Ec(X, Y) \dfn \|X\|^2 + \|Y\|^2,\quad \Fc \dfn \|\Delta X\|^2, \quad\Nc \dfn \Fc/\Ec
\]
(the latter being defined only when $\Ec > 0$) and the backward and forward parabolic operators
\[
  \Lc_B \dfn \pdtau -\Delta^{(2)}, \quad \Lc_F \dfn \pdtau + \Delta^{(2)}.
\]

A computation analogous to the first identity in \eqref{eq:l2ev}, using the boundedness of $\partial_{\tau} g$ together
with equation \eqref{eq:l2pdeode}, yields
\begin{align}\label{eq:l2eev1}
 \dot{\Ec} &\leq C\Ec + 2\Fc + C\sum_{p=0}^2\|\nabla^{(p)}X\|^2  
\end{align}
for some $C$. 

We will need two basic inequalities to proceed. First, with an induction argument using the identity 
$\|\nabla^{(l)} X\|^2 = -(\nabla^{(l+1)}X, \nabla^{l-1}X)$, we obtain 
the following simple interpolation statement: for all $\epsilon > 0$, 
and for all $0 \leq l < k$, there exists $C = C(\epsilon, k, l)$ such that
\begin{equation}\label{eq:gn}
     \|\nabla^{(l)} X\|^2 \leq C\|X\|^2 + \epsilon \|\nabla^{(k)}X\|^2
\end{equation}
for all $\tau\in [0, \Omega^{\prime}]$.
Second, the commutation formula $\left[\Delta, \nabla\right]X = \nabla X \ast \Rm + X \ast\nabla\Rm$
implies that
\[
  \|\Delta X\|^2 = -(\nabla_a X, \nabla_a \Delta X) = \|\nabla^{(2)}X\|^2 + (\nabla X \ast \Rm + X\ast \nabla \Rm, \nabla X),     
\]
and hence, with \eqref{eq:gn}, that, for any $\epsilon > 0$, there is a constant $C= C(\epsilon)$ such that
\begin{equation}\label{eq:gaarding}
  -C\|X\|^2 + (1-\epsilon)|\nabla^{(2)}X\|^2 \leq \|\Delta X\|^2 \leq (1+\epsilon)\|\nabla^{(2)} X\|^2 + C\|X\|^2
\end{equation}
for all $\tau\in [0, \Omega^{\prime}]$. Applying these inequalities to \eqref{eq:l2eev1}, we obtain that $\dot{\Ec} \leq C(\Ec + \Fc)$
on $[0, \Omega^{\prime}]$.  

Assume now that $A = \{\,\tau\in (0, \Omega^{\prime}]\,|\,\Ec(\tau) > 0\,\}$ is nonempty.
As before, there are $0 \leq \alpha < \omega \leq \Omega^{\prime}$ such that $\Ec(\alpha) = 0$ and $\Ec(\tau) > 0$
for all $\tau \in (\alpha, \omega]$. We have
$\dot{\widehat{\log \Ec}}\leq C(\Nc + 1)$ on $(\alpha, \omega]$, so the problem again is to bound $\Nc(\tau)$.

Since $\Delta^{(2)}$ is self-adjoint, the terms in the derivative $\Nc$ can be organized exactly
as in the proof of Theorem \ref{thm:bu2ndorder}. The only new terms are those
coming from $([\partial_{\tau}, \Delta]X, X)$ in the equation for $\dot{\Fc}$,
but these can be controlled using \eqref{eq:l2derbound} and the inequalities \eqref{eq:gn}, \eqref{eq:gaarding}:
\begin{align*}
  \left(\left[\pdtau, \Delta\right]X, \Delta X\right) &\geq -C\|\Delta X\|(\|X\| + \|\nabla X\| +\|\nabla^{(2)} X\|)\\
  &\geq -C(\|X\|^2 + \|\Delta X\|^2).
\end{align*}
Thus we obtain
\begin{align*}
 \dot{\Ec} & \leq C\Ec + (\Lc_F X, X) + (\Lc_B X, X) + 2(\partial_{\tau} Y, Y)\\
 \Fc &= \frac{1}{2}\left((\Lc_F X, X) - (\Lc_B X, X)\right)\\
 \dot{\Fc} & \geq - C(\Ec + \Fc) + \frac{1}{2}\left(\|\Lc_F X\|^2 - \|\Lc_B X\|^2\right)
\end{align*}
and, therefore, again a differential inequality of the form
\[
  \dot{\Nc} = \frac{\dot{\Fc}\Ec - \dot{\Ec}\Fc}{\Ec^2} \geq -C(\Nc+1) - \frac{1}{2\Ec}\left(\|\Lc_B X\|^2 + \|\partial_{\tau} Y\|^2\right).
\]
in which, using \eqref{eq:gn}, \eqref{eq:gaarding}, and the structural condition \eqref{eq:l2pdeode}, the last
term can be bounded below by $-C(\Nc + 1)$. The rest of the argument is identical: we obtain that $\Nc(\tau) \leq 
C(\Nc(\omega)+ 1) \dfn N_0$ and
conclude that $\Ec(\omega) \leq e^{C(N_0+1)(\omega-\tau)}\Ec(\tau)$ on $(\alpha, \omega]$. Sending $\tau\searrow \alpha$
implies that $\Ec(\omega) = 0$, from which we conclude that $A = \emptyset$, i.e., $\Ec\equiv 0$ on $[0, \Omega^{\prime}]$.
The theorem follows by sending $\delta \searrow 0$.

\subsection{Other higher-order equations}
The proof we have above extends easily, for example, to equations of the form
\begin{equation}\label{eq:kcf}
  \partial_t g = 2(-1)^{k+1}\Delta^{(k)}\Rc + P^{k-1}(\Rm)
\end{equation}
for $k\geq 1$ on compact manifolds (or, in the noncompact case, to complete solutions with appropriate bounds on the
derivatives of curvature). Here $P^r(\Rm)$ denotes a polynomial expression consisting of a finite number
of terms of the form $P_m^l(\Rm)$ with $l\leq r$ and $m\geq 0$.
The well-posedness of these (and a much broader class of equations) has been considered in 
\cite{BahuaudHelliwellSTE, BahuaudHelliwellU}. We sketch the argument below.

The curvature tensor of a solution $g$ to \eqref{eq:kcf} will evolve according to an equation of the form
\[
 \partial_t \Rm = (-1)^k\Delta^{(k+1)}\Rm + P^{k+1}(\Rm),
\]
where it is convenient, but not crucial, for us that the leading part of the equation is exactly a power of the tensor Laplacian.
The symmetry and strong ellipticity of the operator is all that we use. (These two conditions even
can be weakened somewhat further: see, e.g., \cite{AgmonNirenberg1}, \cite{AgmonNirenberg2}, \cite{Kukavica2}).
 
Given two solutions $g(\tau)$ and $\gt(\tau)$ to \eqref{eq:kcf} for $\tau\in [0, \Omega]$ on a closed manifold $M$ (or satisfying otherwise
suitable bounds on the derivatives of curvature), and defining $X^{(l)}$ and $Y^{(l)}$ as before, the sections
 $X\dfn (X^{(0)}, X^{(1)}, \ldots, X^{(3k+1)})$ and $Y \dfn (Y^{(0)}, Y^{(1)}, \ldots, Y^{2k+2})$ will satisfy
the following analog of \eqref{eq:l2pdeode},
\begin{equation}\label{eq:kcfpdeode}
 \left|\partial_{\tau}X + (-1)^k\Delta^{(k+1)}X\right| + \left|\partial_{\tau}Y\right|\leq C\left(\sum_{p=0}^{k+1}|\nabla^{(p)} X| + 
 |Y|\right).
\end{equation}
Defining
\[
\Ec(X, Y) \dfn \|X\|^2 + \|Y\|^2, \quad \Fc(X) \dfn \left\{\begin{array}{rl}
							  \|\Delta^{(m)} X\|^2 & \mbox{if}\ k = 2m-1\\
								\|\nabla\Delta^{(m)} X\|^2& \mbox{if}\ k = 2m
                                               \end{array} \right.,
\]
one can then argue as before to concude that, if $\Ec$ does not vanish on $(0, \Omega]$, there is a constant
$C$ such that $\Ec(\tau_2) \leq e^{C(\tau_2 - \tau_1)}\Ec(\tau_1)$ for all $0< \tau_1 \leq \tau_2 \leq \Omega$.  
In fact, using
the interpolation inequalities, one can obtain a similar inequality for the simpler quantity
$\tilde{\Ec} \dfn \|X^{(0)}\|^2 + \|X^{(3k+1)}\|^2 + \|Y^{(0)}\|^2 + \|Y^{(2k+2)}\|^2$.

\end{document}